\newtheorem{theorem}{Theorem}[section]
\newtheorem{lemma}[theorem]{Lemma}
\newtheorem{corollary}[theorem]{Corollary}
\newtheorem{proposition}[theorem]{Proposition}
\newtheorem{claim}[theorem]{Claim}
\newtheorem{question}[theorem]{Question}
\theoremstyle{definition}
\newtheorem{example}[theorem]{Example}
\theoremstyle{remark}
\numberwithin{equation}{section}
\begin{document}

\title[Hyperbolic L-space knots and their Upsilon invariants]{Hyperbolic L-space knots and their Upsilon invariants}


\author{Masakazu Teragaito}
\address{International Institute for Sustainability with Knotted Chiral Meta Matter (WPI-SKCM$^2$), Hiroshima University, 1-3-1, Kagamiyama, Higashi-hiroshima 7398526, Japan}
\email{teragai@hiroshima-u.ac.jp}
\thanks{The author  has been supported by
JSPS KAKENHI Grant Number 20K03587. }

\subjclass[2020]{Primary 57K10; Secondary 57K18}

\date{}


\commby{}

\begin{abstract}
For a knot in the $3$--sphere, the Upsilon invariant is a piecewise linear function
defined on the interval $[0,2]$. 
For an L--space knot, the Upsilon invariant is determined only by the Alexander polynomial of the knot.
We exhibit infinitely many pairs of hyperbolic L--space knots
such that two knots of each pair have distinct Alexander polynomials, so they are not concordant, but share the same Upsilon invariant.
Conversely, we examine the restorability of the Alexander polynomial of an L--space knot
from the Upsilon invariant through the Legendre--Fenchel transformation.
\end{abstract}

\maketitle


\section{Introduction}

For a knot $K$ in the $3$--sphere $S^3$, Ozsv\'{a}th, Stipsicz and Szab\'{o} \cite{OSS} defined
the Upsilon invariant $\Upsilon_K(t)$, which is a piecewise linear real-valued function defined on the interval $[0,2]$.
This invariant is additive under connected sum of knots, and the sign changes for the mirror image of a knot.
Also, it gives a lower bound for the genus, the concordance genus and the four genus.
Although it is originally defined through some modified knot Floer complex, 
Livingston \cite{L} later gives an alternative interpretation on the full knot Floer complex $\mathrm{CFK}^\infty$.

As the most important feature, the Upsilon invariant is a concordance invariant, so
it is obviously not strong to distinguish knots, although it has been used to establish various powerful results
about independent elements in the knot concordance group \cite{FPR, Ho,OSS,W1}.
For a smoothly slice knot, the Upsilon invariant is the zero function.
It depends only on the signature
for an alternating knot or a quasi-alternating knot \cite{OSS}.
Also, it is determined by the $\tau$--invariant for concordance genus one knots \cite{FPR}.

In this paper, we concentrate on L--space knots, which are recognized to form an important class of knots in recent research.
A knot is called an \textit{L--space knot\/} if it admits a positive surgery yielding
an L--space.
Positive torus knots are typical examples of L--space knots.
Note that any non-trivial L--space knot is prime \cite{K0} and non-slice \cite{OS}.
For an L--space knot, the Upsilon invariant is determined only by the Alexander polynomial \cite[Theorem 6.2]{OSS}.

There is another interesting route to lead to the Upsilon invariant of an L--space knot.
The Alexander polynomial gives the formal semigroup \cite{W}, in turn, the gap function \cite{BL0}.
These notions have the same information as the Alexander polynomial.
Then the Upsilon invariant is obtained as the Legendre--Fenchel transform of the gap function \cite{BH}.

In general, the gap function for an L--space knot is not convex, so
the further Legendre--Fenchel transformation on the Upsilon invariant 
does not return the original gap function.
Thus there is a possibility that
distinct gap functions, equivalently Alexander polynomials, correspond
to the same Upsilon invariant.
In other words, it is expected to exist non-concordant L--space knots with the same Upsilon invariant.
We remark that the Alexander polynomial is a concordance invariant for L--space knots \cite{K0}.
Our main result shows that this is possible among hyperbolic L--space knots.

\begin{theorem}\label{thm:main}
There exist infinitely many pairs of hyperbolic L--space knots $K_1$ and $K_2$ such that
they have distinct Alexander polynomials but share the same non-zero Upsilon invariant.
\end{theorem}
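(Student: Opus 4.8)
\emph{Plan of proof.} The first ingredient is the combinatorial description of $\Upsilon$ for L--space knots. Normalize $\Delta_K(t)=\sum_{j=0}^{2m}(-1)^j t^{\alpha_j}$ as a polynomial with $\alpha_0=2g(K)>\alpha_1>\dots>\alpha_{2m}=0$; then $\Delta_K$ determines, and is determined by, the formal semigroup $S_K\subseteq\mathbb{Z}_{\ge 0}$ via $\Delta_K(t)/(1-t)=\sum_{s\in S_K}t^{s}$ \cite{W}, equivalently by its gap function \cite{BL0}. By \cite[Theorem 6.2]{OSS} together with \cite{BH}, $\Upsilon_K$ is (an affine reparametrization of) the Legendre--Fenchel transform of the gap function of $S_K$. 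Since the Legendre--Fenchel transform depends only on the lower convex envelope of its input, two L--space knots whose gap functions share the same convex envelope have $\Upsilon_{K_1}=\Upsilon_{K_2}$, while the non--convexity typical of such gap functions leaves room for $S_{K_1}\ne S_{K_2}$, hence $\Delta_{K_1}\ne\Delta_{K_2}$. Moreover a non--trivial L--space knot always has non--zero Upsilon, because $\Upsilon_K$ has slope $-g(K)\ne 0$ near $t=0$; so the final clause of the statement will be automatic.

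\emph{Combinatorial step.} I would first produce, at the level of Alexander polynomials alone, an infinite list of pairs $\Delta_1^{(n)}\ne\Delta_2^{(n)}$, each obeying the known constraints on the Alexander polynomial of an L--space knot, arranged so that the two associated gap functions have a common lower convex envelope. The mechanism is transparent: start from a formal semigroup whose gap function lies strictly above its convex envelope on some interval of integers, and change membership of a bounded block of integers inside that interval; this alters $\Delta$ but not the convex envelope, hence not $\Upsilon$. Carrying out the alteration inside a one--parameter family — for instance by rescaling a fixed configuration, or by inserting a growing block that stays above the envelope — yields infinitely many such pairs.

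\emph{Realization step.} The real content of the theorem is that these pairs can be realized by \emph{hyperbolic} L--space knots. I would seek the realization inside a family of hyperbolic L--space knots with explicitly computable Alexander polynomials: for example, suitable twisted torus knots $K(p,q;r,s)$ in the parameter ranges where a Vafaee--type criterion yields a positive L--space surgery and where the classification of torus, cable and satellite twisted torus knots certifies hyperbolicity; or a family of 1-bridge braid (Berge--Gabai) L--space knots, whose Alexander polynomials are given by known cabling/braid formulas and whose hyperbolicity follows from the list of exceptional cases. One then matches each prescribed $\Delta_i^{(n)}$ to a member of the family; carrying out the matching within a one--parameter subfamily, where adding a full twist both preserves the L--space property and (by Thurston's hyperbolic Dehn surgery theorem) preserves hyperbolicity outside finitely many exceptions, propagates a single pair into an infinite family.

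\emph{Main obstacle.} The difficulty lies entirely in the realization step. Coincidence of $\Upsilon$ is a rigid constraint, and so is the set of Alexander polynomials attainable by any tractable family of hyperbolic L--space knots; the crux is to engineer the combinatorial step so that \emph{both} polynomials of each pair fall into one accessible family while their non--convex gap functions keep exactly the same convex envelope. Certifying the L--space property (via a surgery description and the Ozsv\'ath--Szab\'o criterion, or via an explicit lens space surgery) and excluding satellite knots are the remaining technical points, after which the twisting argument above supplies the required infinite family.
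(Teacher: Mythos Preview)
Your combinatorial framework is correct and matches the paper's: two L--space knots with different gap functions but identical convex hulls have distinct Alexander polynomials and equal Upsilon invariants, and the slope $-g(K)$ near $t=0$ forces $\Upsilon\not\equiv 0$. But what you have written is a plan, not a proof. The realization step---which you yourself identify as ``the entire content''---is not carried out; you only say you ``would seek'' knots in some family and that ``one then matches'' the polynomials. That is precisely the gap.

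More seriously, your proposed direction for the realization step is backwards and runs into an open problem. You suggest designing the pair $(\Delta_1^{(n)},\Delta_2^{(n)})$ combinatorially first and then hunting for hyperbolic L--space knots with those exact Alexander polynomials. But the geography question---which polynomials of the correct shape are actually realized by L--space knots---is wide open (the paper discusses this obstruction explicitly in its final section). There is no known mechanism for prescribing $\Delta$ and then producing a hyperbolic L--space knot with that $\Delta$, so ``matching each prescribed $\Delta_i^{(n)}$ to a member of the family'' is not something you can just do. The paper proceeds in the opposite direction: it writes down two explicit one--parameter families of closed positive $4$--braids (arising from surgery on a fixed strongly invertible link), computes their Alexander polynomials and formal semigroups directly, checks that the two gap functions differ only on an interval where both lie above the common convex hull, proves hyperbolicity via a bridge--number plus formal--semigroup argument, and certifies an L--space surgery by the Montesinos trick (the resulting double branched covers are Seifert fibered and handled by the Lisca--Stipsicz criterion). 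None of these concrete verifications appear in your proposal.
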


Thus two hyperbolic L--space knots in our pair are not concordant.
In the literature, there are plenty of examples of non-concordant knots sharing the same Upsilon invariant
\cite{Al, FPR, Ho2,W1, W2,W3,X}.
However, they use either connected sums of torus knots or satellite knots, which
are not hyperbolic.

Since the Upsilon invariant is determined only by the Alexander polynomial
for an L--space knot,
any pair of L--space knots sharing the same Alexander polynomial have
the same Upsilon invariant.
For example, the hyperbolic L--space knot \texttt{t09847} in the SnapPy census has the same Alexander polynomial
as the $(2,7)$--cable of $T(2,5)$, which is an L--space knot.
There are infinitely many such pairs consisting of a hyperbolic L--space knot
and an iterated torus L--space knot (found in \cite{BK}).

However, we checked Dunfield's list of $632$ hyperbolic L--space knots (\cite{A,BK}), and
confirmed that there is no duplication among their Alexander polynomials and that there is no one
sharing  the same Alexander polynomial as a torus knot.
This leads us to pose a question.

\begin{question}
\begin{enumerate}
\item[(1)]
Do there exist hyperbolic L--space knots which have the same Alexander polynomial,
equivalently, which are concordant?
\item[(2)]
Does there exist a hyperbolic L--space knot which is concordant to a torus knot?
\end{enumerate}
\end{question}

In general, it is rare that
the Alexander polynomial of an L--space knot is restorable from the Upsilon invariant.
The reason is the fact that the gap function, which has the same information as the Alexander polynomial,
is not convex, and the Upsilon invariant depends only on the convex hull of
the gap function.
In fact, our knots in Theorem \ref{thm:main} are designed so that
they have distinct Alexander polynomials, but their gap functions share the same convex hull, so
the same Upsilon invariant.

On the other hand,
there is a chance that the gap function is restorable from its convex hull.
This means that the Alexander polynomial is also restorable from the Upsilon invariant
through the Legendre--Fenchel transformation.
We can give infinitely many such gap functions, equivalently Alexander polynomials,
but there lies a hard question, called
a geography question,  whether such gap function can be realized by an L-space knot or not.

In this paper, we can give only two hyperbolic L--space knots whose Alexander polynomials
are restorable from the Upsilon invariants.

\begin{theorem}\label{thm:restorable}
Let $K$ be the hyperbolic L--space knot \texttt{t09847} or \texttt{v2871} in the SnapPy census.
Then the Alexander polynomial $\Delta_K(t)$  of $K$ is restorable from the Upsilon invariant $\Upsilon_K(t)$.
That is,  the equation $\Upsilon_K(t)=\Upsilon_{K'}(t)$ implies $\Delta_K(t)=\Delta_{K'}(t)$ (up to units) for any other L--space knot $K'$.
\end{theorem}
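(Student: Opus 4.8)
The plan is to convert the statement into a finite combinatorial rigidity problem about numerical semigroups, and then settle that problem by inspection for the two census knots.

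First I would set up the dictionary from \cite{W,BL0,BH}. For an L--space knot $K$ the Alexander polynomial $\Delta_K(t)$ is equivalent data to the formal semigroup $S_K\subset\mathbb{Z}_{\ge 0}$, which is in turn recorded by the gap function $g_K(m)=\#\{k\in\mathbb{Z}_{\ge 0}\setminus S_K:\ k<m\}$: a nondecreasing step function with increments in $\{0,1\}$, with $g_K(0)=0$ and $g_K(m)=g(K)$ for all $m\ge 2g(K)$, where $g(K)$ is the Seifert genus. By \cite{BH}, $\Upsilon_K$ is, up to an affine reparametrization, the Legendre--Fenchel transform of $g_K$. A function and its closed convex hull have the same Legendre--Fenchel transform, and conversely the transform determines the closed convex hull (its biconjugate); since all of the functions involved are piecewise linear on a compact interval, for L--space knots $K_1,K_2$ we obtain $\Upsilon_{K_1}=\Upsilon_{K_2}$ if and only if $\operatorname{conv}(g_{K_1})=\operatorname{conv}(g_{K_2})$. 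The implication ``equal convex hulls $\Rightarrow$ equal $\Upsilon$'' underlies the pairs in Theorem~\ref{thm:main}; here we need the converse. Since $\Delta_{K'}=\Delta_K$ (up to units) is equivalent to $S_{K'}=S_K$, equivalently to $g_{K'}=g_K$, Theorem~\ref{thm:restorable} is reduced to showing that $g_K$ is the \emph{only} gap function of an L--space knot whose convex hull equals $\operatorname{conv}(g_K)$.

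Next I would carry out the explicit bookkeeping. Taking $K$ to be \texttt{t09847}, one reads off $\Delta_K(t)$ --- noted in the Introduction to agree with the Alexander polynomial of the $(2,7)$--cable of $T(2,5)$, hence of genus $7$ --- together with $S_K$, $g_K$, and the vertices of $\operatorname{conv}(g_K)$, which are exactly the breakpoints of $\Upsilon_K$; and likewise for \texttt{v2871}. The convex hull already constrains any competitor $g_{K'}$ severely: its eventual constant value is $\#(\mathbb{Z}_{\ge 0}\setminus S_{K'})=g(K')$, so $g(K')=g(K)=:g$; the value of $g_{K'}$ at each vertex of $\operatorname{conv}(g_K)$ is forced; and $g_{K'}$ lies weakly above $\operatorname{conv}(g_K)$ everywhere. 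Hence on each interval between two consecutive vertices, $g_{K'}$ is a unit-step function with prescribed total increase and a lower barrier, so only finitely many functions remain, and one enumerates them.

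The final step is to impose on the surviving candidates the two conditions characterizing semigroups of L--space knots: closure of $S_{K'}$ under addition, and the symmetry $m\in S_{K'}\iff 2g-1-m\notin S_{K'}$, equivalently $g_{K'}(m)-g_{K'}(2g-m)=m-g$. For each candidate other than $g_K$ one checks that one of these two conditions fails, which leaves $g_{K'}=g_K$ and hence $\Delta_{K'}=\Delta_K$. The main obstacle is exactly this verification: the inter-vertex enumeration has to be set up so that it is genuinely exhaustive --- one bounds the number of unit steps on each block using monotonicity together with the symmetry relation --- and the additivity and symmetry conditions must then turn out restrictive enough to kill every competitor. That this happens for \texttt{t09847} and \texttt{v2871}, but fails for the knots of Theorem~\ref{thm:main} (whose gap functions are by construction not recoverable from their common convex hull), is why the statement is confined to these two examples and why no general theorem is expected.
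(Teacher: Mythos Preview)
Your reduction---$\Upsilon_{K}=\Upsilon_{K'}$ if and only if the gap functions share the same convex hull, so the task is a finite rigidity check---is exactly the paper's viewpoint. The problem is your elimination step. You propose to impose ``closure of $S_{K'}$ under addition'' as one of ``the two conditions characterizing semigroups of L--space knots'' and to discard competitors that violate it. But the formal semigroup of an L--space knot is \emph{not} in general closed under addition: this is precisely the content of Corollary~\ref{cor:formal} for the knots of Theorem~\ref{thm:main}, and the paper stresses elsewhere that for hyperbolic L--space knots the formal semigroup is ``hardly a semigroup''. So additivity is simply not available as a constraint on $K'$, and any competitor you rule out solely for failing it has not been legitimately eliminated. (Your symmetry condition $m\in S_{K'}\Leftrightarrow 2g-1-m\notin S_{K'}$ is correct, but as written your plan relies on both.)

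The good news is that no elimination step is needed at all. The paper's argument is that for \texttt{t09847} and \texttt{v2871} your enumeration already returns a single candidate. The only constraints used are that the graph of $2J(-m)$ has slope $0$ or $2$ on each unit interval, takes even integer values at integers, and lies on or above the convex hull while meeting it at every vertex. For these two knots each linear piece of the convex hull is short---lengths $3$ or $4$ for the pieces of slope $\tfrac12,\tfrac23,\tfrac43,\tfrac32$, and a single length-$2$ piece of slope $1$ for \texttt{v2871}---and on such a short block the staircase is forced. For instance, on a slope-$1$ block of length $2$ from height $h$ to $h+2$, the pattern $(0,2)$ would put the midpoint at height $h$, below the hull value $h+1$, so only $(2,0)$ survives. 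By contrast, the slope-$1$ region in the examples of Theorem~\ref{thm:main} has length $4n$, and there the two different fillings (branches (d) and (e) in the paper's figures) produce the two distinct gap functions with a common hull. So the whole proof is this slope/length bookkeeping; drop the additivity hypothesis and sharpen your enumeration accordingly.
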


In Section \ref{sec:knots}, we give a pair of knots $K_1$ and $K_2$, which yields
an infinite family of pairs of L--space knots.
In Section \ref{sec:alex}, we calculate their Alexander polynomials
and the formal semigroups, which are sufficient to prove that
the knots are hyperbolic.
Section \ref{sec:upsilon} gives
the gap functions and their convex hulls, and
confirm that they correspond to the same Upsilon invariant.
Section \ref{sec:mont} shows that
the knots admit L--space surgery through the Montesinos trick, which completes the proof of Theorem \ref{thm:main}.
In the last section, we investigate the restorability of Alexander polynomial from the Upsilon invariant, and
prove Theorem \ref{thm:restorable}.

\section{The pairs of hyperbolic L-space knots}\label{sec:knots}

For any integer integer $n\ge 1$, 
the surgery diagrams illustrated in Figure \ref{fig:knots} define
our knots $K_1$ and $K_2$, where the surgery coefficient on $C_1$ is $-1/n$ and
that on $C_2$ is $-1/2$.
The images of $K$ after these surgeries in (1) and (2) of Figure \ref{fig:knots} give $K_1$ and $K_2$, respectively.
(The link with orientations is placed in a strongly invertible position, and
the axis is depicted there for later use.)

\begin{figure}[htpb]
\begin{center}
\includegraphics[scale=0.34]{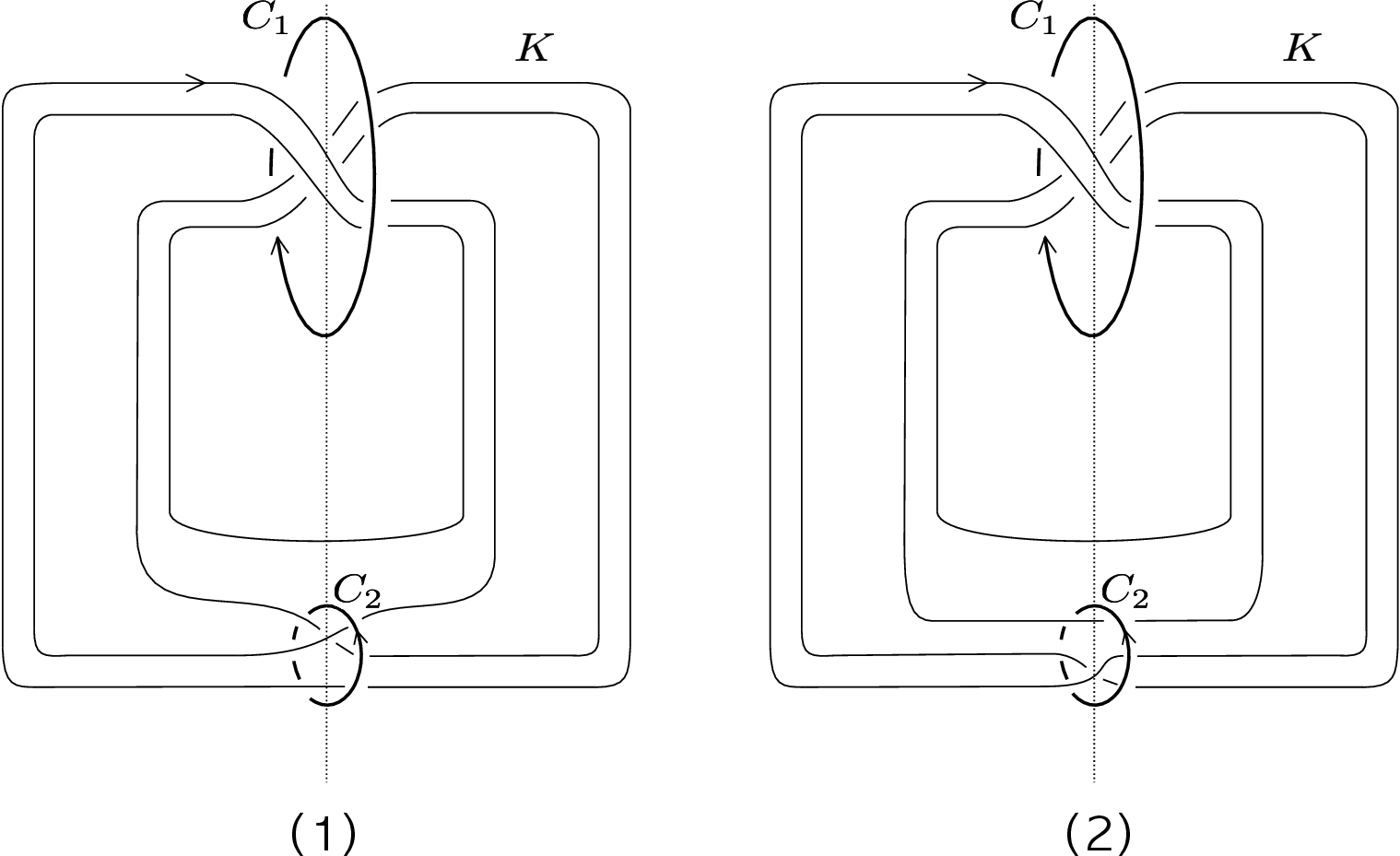}
\caption{The knots $K_1$ and $K_2$ are the images of $K$ after performing $(-1/n)$--surgery on $C_1$ and $(-1/2)$--surgery on $C_2$.}\label{fig:knots}
\end{center}
\end{figure}

Hence, our knots are the closures of $4$--braids
\[
(\sigma_2\sigma_1\sigma_3 \sigma_2) (\sigma_1\sigma_2\sigma_3)^{4n}  \sigma_2^{-1} (\sigma_2\sigma_3)^6\quad  \text{and}\quad
(\sigma_2\sigma_1\sigma_3 \sigma_2) (\sigma_1\sigma_2\sigma_3)^{4n}  \sigma_3^{-1} (\sigma_2\sigma_3)^6,
\]
where
$\sigma_i$ is the standard generator of the $4$--strand braid group.
 When $n=1$, $K_1$ is \texttt{m240}, and $K_2$ is \texttt{t10496} in the SnapPy census \cite{CDG}.

Theorem \ref{thm:main} immediately follows from the next.

\begin{theorem}\label{thm:knots}
For each integer $n\ge 1$,
the knots $K_1$ and $K_2$ defined above satisfy the following.
\begin{itemize}
\item[(1)]
They are hyperbolic.
\item[(2)]
$(16n+21)$--surgery on $K_1$ and
$(16n+20)$--surgery on $K_2$ yield L--spaces.
\item[(3)]
Their Alexander polynomials are distinct.
\item[(4)]
They share the same Upsilon invariant.
\end{itemize}
\end{theorem}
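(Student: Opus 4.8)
The plan is to establish the four assertions in order, each by an explicit computation tailored to the surgery description in Figure \ref{fig:knots}. First, for (3) and (4), I would compute the Alexander polynomials $\Delta_{K_1}(t)$ and $\Delta_{K_2}(t)$ directly from the $4$--braid words given above, using the Burau representation (or, more conveniently, by recognizing that $K_i$ is obtained from the torus-knot-like braid $(\sigma_1\sigma_2\sigma_3)^{4n}$ by a single band modification coming from the $(-1/2)$--surgery on $C_2$). The surgery coefficients are chosen precisely so that the resulting $\Delta_{K_i}(t)$ are \emph{L--space polynomials}, i.e. they have the form $\sum_{j}(-1)^j t^{a_j}$ with exponents determined by a numerical semigroup; from that normalized form one reads off the formal semigroup of Wang \cite{W} and hence the gap function of \cite{BL0}. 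I expect $\Delta_{K_1}$ and $\Delta_{K_2}$ to differ in finitely many low-degree coefficients (reflecting the fact that the surgeries on $C_2$ differ by conjugating $\sigma_2^{-1}\leftrightarrow\sigma_3^{-1}$), which gives (3), while the two gap functions will turn out to have the \emph{same convex hull}; since the Upsilon invariant of an L--space knot depends only on that convex hull via the Legendre--Fenchel transform \cite{BH} (equivalently by \cite[Theorem 6.2]{OSS}), this yields (4).

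For (1), hyperbolicity, I would argue that each $K_i$ is neither a torus knot nor a satellite knot, and then invoke Thurston's trichotomy. The cleanest route is again via the Alexander polynomial and the formal semigroup: a torus knot $T(p,q)$ has a completely determined Alexander polynomial / semigroup, and one checks the semigroups computed in (3)--(4) are not of that form for any $n$ (for instance by a gap-count or genus-vs-bridge-index obstruction). To rule out satellites one can use that a satellite L--space knot is an iterated torus knot whose Alexander polynomial factors accordingly \cite{BK}; the polynomials here will be seen to be irreducible (or to have the wrong factorization). For small $n$ one can simply cite SnapPy's verified computation that \texttt{m240}, \texttt{t10496}, etc., are hyperbolic; for the general family the semigroup obstruction is the substitute.

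For (2), the L--space surgeries, the natural tool is the Montesinos trick, as announced for Section \ref{sec:mont}: the link in Figure \ref{fig:knots} is drawn in strongly invertible position with its axis shown, so quotienting by the involution turns the integral surgeries on $K_1$ and $K_2$ into Dehn fillings of a Montesinos-type link exterior, and one shows the filled manifold is the double branched cover of an (alternating, or otherwise manifestly quasi-alternating) link, hence an L--space. The surgery slopes $16n+21$ and $16n+20$ should match $2g(K_i)-1$, consistent with the L--space knot condition. \textbf{The main obstacle} I anticipate is exactly this step: carrying the Kirby-calculus bookkeeping through the quotient by the involution, keeping track of how the $-1/n$ and $-1/2$ surgeries on $C_1,C_2$ and the integral surgery on $K$ transform, and then \emph{identifying the quotient link} concretely enough to certify that its double branched cover is an L--space uniformly in $n$ — the algebra in (3)--(4) is routine, but the Montesinos-trick identification is where the real work lies.
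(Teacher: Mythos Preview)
Your overall strategy matches the paper's: compute the Alexander polynomials and formal semigroups, extract the gap functions and verify their convex hulls coincide (for (3)--(4)), apply the Montesinos trick for (2), and rule out torus and satellite knots for (1). You also correctly identify the Montesinos step as the main source of work; indeed for $K_2$ the paper needs two levels of skein resolution before reaching Seifert fibred double branched covers.

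There is, however, a genuine gap in your argument for (1). Your claim that ``a satellite L--space knot is an iterated torus knot'' is false: suitable cables of hyperbolic L--space knots (e.g.\ of the $(-2,3,7)$--pretzel) are satellite L--space knots that are not iterated torus knots, and \cite{BK} makes no such assertion. Irreducibility of $\Delta_{K_i}$ is not the right obstruction either. The paper's argument uses an ingredient you are missing: since $K_i$ is the closure of a $4$--braid its bridge number is at most four, and Schubert's inequality \cite{Sc} then forces any companion to be $2$--bridge with pattern wrapping number two. A $2$--bridge L--space knot is a torus knot $T(2,2k{+}1)$ by \cite{OS}, so $K_i$ would be a $2$--cable of a torus knot, hence an iterated torus knot, and by \cite{W} its formal semigroup would be an actual semigroup. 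The contradiction comes from checking that the computed $\mathcal{S}_{K_i}$ is not closed under addition (e.g.\ $4\in\mathcal{S}_{K_i}$ but $4n+4\notin\mathcal{S}_{K_i}$).

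Two smaller points. The paper computes $\Delta_{K_i}$ not via Burau but via the multivariable Alexander polynomial of $K\cup C_1\cup C_2$ combined with the Torres condition and the surgery substitution $(y,z)\mapsto(x^{4n}y^{-1},x^6z^{-1})$; this makes the $n$--dependence transparent and is more tractable than a Burau computation on braids whose length grows with $n$. Also, the slopes $16n+21$ and $16n+20$ do \emph{not} equal $2g(K_i)-1=12n+11$; they are chosen purely for convenience under the Montesinos trick, and once a single L--space surgery is exhibited the full range $r\ge 2g-1$ follows from \cite{He,OS3}.
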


\begin{proof}
This follows from Lemmas \ref{lem:hyp}, \ref{lem:K1mont}, \ref{lem:K2mont},
Theorems \ref{thm:alex1}, \ref{thm:alex2}, and Corollary \ref{cor:upsilon}.
(To see that the Alexander polynomials of $K_1$ and $K_2$ are distinct,
it may be easier to compare their formal semigroups. 
From Propositions \ref{prop:formal1} and \ref{prop:formal2}, we have that
$4n+7\in \mathcal{S}_{K_1}$, but $4n+7\not\in \mathcal{S}_{K_2}$.)
\end{proof}

Each diagram in Figure \ref{fig:knots} has a single negative crossing, but
it can be cancelled obviously with some positive crossing.
Hence both knots are represented as the closures of positive braids, which implies that
they are fibered \cite{S}.
Then it is straightforward to calculate their genera $g(K_i)$, and
we see that $g(K_1)=g(K_2)=6n+6$.

Also, if once we know that $K_i$ is an L--space knot, then
$r$--surgery on $K_i$ gives an L--space if and only if $r \ge 2g(K_i)-1=12n+11$ by \cite{He,OS3}.
Our choices of surgery coefficients in Theorem \ref{thm:knots}(2) come from
the manageability in the process of the Montesinos trick in Section \ref{sec:mont}.

\section{Alexander polynomials}\label{sec:alex}

We calculate the Alexander polynomials of $K_1$ and $K_2$.
Since $K_1$ and $K_2$ are obtained from $K$ by performing some surgeries on $C_1$ and $C_2$,
we mimic the technique of \cite{BK}.

\begin{theorem}\label{thm:alex1}
The Alexander polynomial of $K_1$ is  given as
\[
\begin{split}
\Delta_{K_1}(t) &=
\sum_{i=0}^n (t^{8n+12 +4i}-t^{8n+11+4i})+(t^{8n+9}-t^{8n+8})+\sum_{i=0}^n(t^{4n+6+4i}-t^{4n+4+4i})\\
&\quad +(t^{4n+3}-t^{4n+1})+\sum_{i=0}^{n-1}(t^{4+4i}-t^{1+4i})+1.
\end{split}
\]
\end{theorem}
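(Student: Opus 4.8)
The plan is to follow the strategy of \cite{BK} for computing Alexander polynomials of knots obtained by surgery on components of a link. Since $K_1$ arises from the knot $K$ in Figure \ref{fig:knots} by $(-1/n)$--surgery on $C_1$ and $(-1/2)$--surgery on $C_2$, I would work in the $3$--component link exterior $X = S^3 \setminus (K \cup C_1 \cup C_2)$ and track the effect of Dehn filling the $C_i$--cusps. Concretely, I would first compute the multivariable Alexander polynomial $\Delta_{K \cup C_1 \cup C_2}(x, y, z)$ of the link (here $x$ is the meridian variable for $K$, and $y, z$ those for $C_1, C_2$), or equivalently work with the Torres formula / a presentation of $H_1$ of the infinite cyclic cover. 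The surgery formula then expresses $\Delta_{K_1}(t)$ (up to units and a cyclotomic correction factor from Torres) in terms of $\Delta_{K \cup C_1 \cup C_2}$ evaluated after substituting the homological images of the filling slopes: the $(-1/n)$ and $(-1/2)$ fillings identify $y$ and $z$ with appropriate powers of $t$ dictated by the linking numbers of $C_1$ and $C_2$ with $K$.

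The concrete steps, in order, are: (i) read off from Figure \ref{fig:knots} the linking numbers $\mathrm{lk}(K, C_1)$ and $\mathrm{lk}(K, C_2)$ and set up the homomorphism $H_1(X) \to \mathbb{Z}$ sending the meridian of $K$ to $1$; (ii) obtain the multivariable Alexander polynomial of the link $K \cup C_1 \cup C_2$, either from a Wirtinger/Fox-calculus presentation of the braid-closure description on the previous page or by the method used in \cite{BK}; (iii) perform the substitutions coming from the two surgeries and simplify, using the Torres conditions to pin down the overall normalization; (iv) collect terms and verify that the result matches the claimed closed form. As a consistency check I would confirm that $\Delta_{K_1}(1) = \pm 1$, that $\Delta_{K_1}(t)$ is symmetric of span $2g(K_1) = 12n+12$ (matching the genus computation $g(K_1) = 6n+6$ noted after Theorem \ref{thm:knots}), and that the coefficients are suitably alternating so that the formal semigroup extracted from it is genuinely a semigroup — the latter being needed anyway in Section \ref{sec:alex} and a strong sanity test on the algebra.

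The main obstacle is step (ii): producing a usable formula for the multivariable Alexander polynomial of the link in Figure \ref{fig:knots} as a function of $n$. For a single fixed $n$ this is a finite computation, but here the braid word $(\sigma_1\sigma_2\sigma_3)^{4n}$ grows with $n$, so one must identify the pattern — e.g. by diagonalizing (or putting into a convenient normal form) the Burau-type matrix associated to the periodic block $(\sigma_1\sigma_2\sigma_3)^{4}$ and taking its $n$-th power, then multiplying by the fixed prefix and suffix matrices. Managing this linear algebra cleanly, and then extracting from a determinant the explicit sum-of-geometric-series form displayed in the statement, is where essentially all the work lies; once the pattern in $n$ is correctly isolated, the remaining simplification into the four geometric sums plus the two stray binomials plus the constant term $1$ is routine bookkeeping.
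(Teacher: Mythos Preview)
Your overall strategy---compute the multivariable Alexander polynomial of the link $L=K\cup C_1\cup C_2$, then use the surgery/Torres machinery to extract $\Delta_{K_1}(t)$---is exactly what the paper does. However, you have misidentified the ``main obstacle,'' and this misreading would send you down an unnecessary and much harder path.

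The link $L=K\cup C_1\cup C_2$ in Figure~\ref{fig:knots} is a \emph{fixed} link; it does not depend on $n$. The parameter $n$ enters only through the surgery coefficient $-1/n$ on $C_1$. Consequently $\Delta_L(x,y,z)$ is a single concrete polynomial in three variables, computed once (the paper simply uses software and records it). The $n$--dependence appears only at the substitution step: the $(-1/n)$--filling changes the meridian of $C_1$ so that, in homology, the old meridian becomes $x^{4n}y^{-1}$ (since $\mathrm{lk}(K,C_1)=4$), and similarly the $(-1/2)$--filling on $C_2$ gives $z\mapsto x^{6}z^{-1}$. After two applications of the Torres condition one obtains
\[
\Delta_{K_1}(t)=\frac{t-1}{(t^4-1)(t^3-1)}\,\Delta_L(t,t^{4n},t^6),
\]
which is already a closed formula in $n$. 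The remaining work is pure polynomial bookkeeping: factor out $t^3-1$ and then $t^3+t^2+t+1$, and verify that the quotient equals the displayed sum. There is no need to diagonalize any Burau-type matrix or to take an $n$-th power of anything; the braid word $(\sigma_1\sigma_2\sigma_3)^{4n}$ describes $K_1$ \emph{after} surgery, not the link $L$ whose Alexander polynomial you need.

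One further correction to your sanity checks: the formal semigroup of $K_1$ is \emph{not} a semigroup (indeed $4\in\mathcal{S}_{K_1}$ but $4n+4\notin\mathcal{S}_{K_1}$). This is not a bug but a feature---it is precisely what the paper uses to rule out $K_1$ being a torus knot or an iterated torus knot, hence to conclude hyperbolicity. So that particular ``consistency check'' would (correctly) fail.
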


\begin{proof}
Let $L=K\cup C_1\cup C_2$ be the oriented link illustrated in Figure \ref{fig:knots}(1).
Its multivariable Alexander polynomial is 
\[
\begin{split}
\Delta_L(x,y,z) &=x^6y^3z^2+x^5y^2z-x^3y^3z^2+x^3y^2z^2-x^3y^2z\\
&\quad -x^2y^2z^2+x^4y+x^3yz-x^3y+x^3-xyz-1,
\end{split}
\]
where the variables $x,y,z$ correspond to the (oriented) meridians of $K$, $C_1$, $C_2$, respectively.
(We used \cite{CDG, Ko} for the calculation.)

Performing $(-1/n)$--surgery on $C_1$ and $(-1/2)$--surgery on $C_2$ changes
the link $K\cup C_1\cup C_2$ to $K_1\cup C_1^n\cup C_2^n$.
These two links have homeomorphic exteriors.
Hence the induced isomorphism of the homeomorphism on their homology groups relates
the Alexander polynomials of two links \cite{F,M}.

Let $\mu_K$, $\mu_{C_1}$ and $\mu_{C_2}$ be the homology classes of meridians
of $K$, $C_1$, $C_2$, respectively.
We assume that each meridian has linking number one with the corresponding component.
Furthermore, let $\lambda_K$, $\lambda_{C_1}$ and $\lambda_{C_2}$ be the homology classes of their oriented longitudes.
We see that $\lambda_{C_1}=4\mu_K$ and $\lambda_{C_2}=3 \mu_K$.

Let $\mu_{K_1}$, $\mu_{C_1^n}$ and $\mu_{C_2^n}$ be the homology classes of meridians 
of $K_1$, $C_1^n$ and $C_2^n$.
Then we have that
$\mu_{K_1}=\mu_K$, $\mu_{C_1^n}=-\mu_{C_1}+n \lambda_{C_1}$, $\mu_{C_2^n}=-\mu_{C_2}+2\lambda_{C_2}$.
Hence
\[
\mu_{K}=\mu_{K_1}, \quad \mu_{C_1}=-\mu_{C_1^n}+4n\mu_{K_1}, \quad \mu_{C_2}=-\mu_{C_2^n}+6\mu_{K_1}.
\]
Thus we have the relation between the Alexander polynomials as
\begin{equation}\label{eq:relation-alexander}
\Delta_{K_1\cup C_1^n\cup C_2^n}(x,y,z)=\Delta_{L}(x,x^{4n}y^{-1},x^{6}z^{-1} ).
\end{equation}

Since $\mathrm{lk}(K_1,C_2^n)=\mathrm{lk}(K,C_2)=3$ and $\mathrm{lk}(C_1^n,C_2^n)=\mathrm{lk}(C_1,C_2)=0$,
 the Torres condition \cite{T} gives
\begin{align*}
\Delta_{K_1\cup C_1^n\cup C_2^n}(x,y,1) &= (x^3y^0-1)\Delta_{K_1\cup C_1^n}(x,y)\\
&= (x^3-1)\Delta_{K_1\cup C_1^n}(x,y).
\end{align*}

Furthermore, since $\mathrm{lk}(K_1,C_1^n)=\mathrm{lk}(K,C_1)=4$, 
\[
\Delta_{K_1\cup C_1^n}(x,1)=\frac{x^4-1}{x-1}\Delta_{K_1}(x).
\]
Thus
\[
\Delta_{K_1}(x)=\frac{x-1}{x^4-1}\Delta_{K_1\cup C_1^n}(x,1)=\frac{x-1}{(x^4-1)(x^3-1)}\Delta_{K_1\cup C_1^n\cup C_2^n}(x,1,1).
\]
Then the relation (\ref{eq:relation-alexander}) gives
\[
\begin{split}
\Delta_{K_1}(t)&=\frac{t-1}{(t^4-1)(t^3-1)} \Delta_L(t,t^{4n},t^6)\\
&=\frac{t-1}{(t^4-1)(t^3-1)}( t^{12n+18}-t^{12n+15}+t^{8n+15}-t^{8n+14}
+t^{8n+11}-t^{8n+9}\\
&\quad +t^{4n+9}-t^{4n+7}+t^{4n+4}-t^{4n+3}+t^3-1)\\
&=\frac{1}{t^3+t^2+t+1}\cdot \frac{1}{t^3-1}
(
t^{12n+15}(t^3-1)+t^{8n+9}(t^6-1)-t^{8n+11}(t^3-1)\\
&\quad +t^{4n+3}(t^6-1)-t^{4n+4}(t^3-1)+(t^3-1)
)\\
&=\frac{1}{t^3+t^2+t+1}
( t^{12n+15}-t^{8n+11}-t^{4n+4}+1
+(t^{8n+9}+t^{4n+3})(t^3+1) ).
\end{split}
\]

We put
\begin{align*}
A_1&= \sum_{i=0}^n (t^{8n+12+4i}-t^{8n+11+4i}),\\
A_2&=t^{8n+9}-t^{8n+8},\\
A_3&=\sum_{i=0}^n (t^{4n+6+4i}-t^{4n+4+4i}),\\
A_4&=t^{4n+3}-t^{4n+1},\\
A_5&=\sum_{i=0}^{n-1}(t^{4+4i}-t^{1+4i}).
\end{align*}

Then a direct calculation shows 
\begin{align*}
(t^3+t^2+t+1)A_1& =t^{12n+15}-t^{8n+11}, \\
(t^3+t^2+t+1)A_2& =t^{8n+12}-t^{8n+8},\\
(t^3+t^2+t+1)A_3&=(t^{8n+9}-t^{4n+5})+(t^{8n+8}-t^{4n+4}),\\
(t^3+t^2+t+1)A_4& =(t^{4n+6}-t^{4n+2})+(t^{4n+5}-t^{4n+1}),\\
(t^3+t^2+t+1)A_5&=(t^{4n+3}-t^3)+(t^{4n+2}-t^2)+(t^{4n+1}-t).
\end{align*}
Thus
\[
\begin{split}
 (t^3+t^2+t+1)(A_1+A_2+A_3+A_4&+A_5+1)=
 t^{12n+15}-t^{8n+11}-t^{4n+4}+1\\
 &+ t^{8n+12}+t^{8n+9}+t^{4n+6}+t^{4n+3}.
\end{split}
\]
We have the conclusion $\Delta_{K_1}(t)=A_1+A_2+A_3+A_4+A_5+1$ as desired.
\end{proof}

\begin{theorem}\label{thm:alex2}
The Alexander polynomial of $K_2$ is  given as
\[
\begin{split}
\Delta_{K_2}(t) &=
\sum_{i=0}^n (t^{8n+12 +4i}-t^{8n+11+4i})+(t^{8n+9}-t^{8n+8})+\sum_{i=0}^{2n-1}(t^{4n+8+2i}-t^{4n+7+2i})\\
&\quad +(t^{4n+6}-t^{4n+4})+(t^{4n+3}-t^{4n+1})+
\sum_{i=0}^{n-1}(t^{4+4i}-t^{1+4i})+1.
\end{split}
\]
\end{theorem}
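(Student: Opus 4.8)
The plan is to repeat, almost verbatim, the argument proving Theorem \ref{thm:alex1}: $K_2$ arises from a surgery description parallel to the one used for $K_1$, differing only by a local change (the replacement of $\sigma_2^{-1}$ by $\sigma_3^{-1}$ in the braid word, i.e. a different band clasping $K$ near $C_2$). First I would use \cite{CDG, Ko} to compute the multivariable Alexander polynomial $\Delta_{L'}(x,y,z)$ of the oriented link $L' = K \cup C_1 \cup C_2$ of Figure \ref{fig:knots}(2), with $x,y,z$ the meridians of $K, C_1, C_2$. The homological data is as before: $\mathrm{lk}(K,C_1)=4$, $\mathrm{lk}(K,C_2)=3$, $\mathrm{lk}(C_1,C_2)=0$, so $\lambda_{C_1}=4\mu_K$ and $\lambda_{C_2}=3\mu_K$, and after $(-1/n)$--surgery on $C_1$ and $(-1/2)$--surgery on $C_2$ one obtains, exactly as in (\ref{eq:relation-alexander}),
\[
\Delta_{K_2\cup C_1^n\cup C_2^n}(x,y,z)=\Delta_{L'}(x,\,x^{4n}y^{-1},\,x^{6}z^{-1}).
\]

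Next, two applications of the Torres condition \cite{T} — first setting $z=1$ (using $\mathrm{lk}(K_2,C_2^n)=3$ and $\mathrm{lk}(C_1^n,C_2^n)=0$), then $y=1$ (using $\mathrm{lk}(K_2,C_1^n)=4$) — reduce the computation to
\[
\Delta_{K_2}(t)=\frac{t-1}{(t^4-1)(t^3-1)}\,\Delta_{L'}(t,\,t^{4n},\,t^{6}),
\]
just as for $K_1$. It then remains to simplify this rational function. I expect $\Delta_{L'}(t,t^{4n},t^{6})$ to be a short signed sum of monomials with exponents affine in $n$, so that after cancelling the cyclotomic factors $(t^4-1)/(t-1)$ and $t^3-1$ one can write $\Delta_{K_2}(t)=\tfrac{1}{t^3+t^2+t+1}$ times an explicit signed monomial sum $P(t)$. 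To certify the stated closed form cleanly, rather than dividing directly I would split the asserted right-hand side as $B_1+\dots+B_6+1$ (the analogue of the $A_1+\dots+A_5+1$ used for $K_1$), multiply $B_1+\dots+B_6+1$ by $t^3+t^2+t+1$, observe that each of the six groups telescopes, and check that the resulting sum equals $P(t)$.

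The only genuinely new ingredient compared with Theorem \ref{thm:alex1} is the middle block $\sum_{i=0}^{2n-1}(t^{4n+8+2i}-t^{4n+7+2i})$, a step-$2$ sum over $2n$ terms in place of a step-$4$ sum over $n+1$ terms; this is the arithmetic trace of the different band, and it is where the bookkeeping will be most delicate, since multiplying a step-$2$ sum by $t^3+t^2+t+1$ does not collapse quite as tidily as the step-$4$ case. Everything else is a mechanical transcription of the previous proof; the hyperbolicity of $K_2$ and its L--space surgery play no role here and are established elsewhere.
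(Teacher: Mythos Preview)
Your proposal is correct and follows essentially the same approach as the paper: compute the multivariable Alexander polynomial of the link in Figure~\ref{fig:knots}(2), substitute via the surgery/Torres formalism to get $\Delta_{K_2}(t)=\frac{t-1}{(t^4-1)(t^3-1)}\Delta_{L'}(t,t^{4n},t^6)$, and then verify the closed form by introducing blocks $B_1,\dots,B_6$ and checking that $(t^3+t^2+t+1)(B_1+\cdots+B_6+1)$ matches. Your concern about the step-$2$ block $B_3$ is well placed but in fact it collapses cleanly too, since $(t^3+t^2+t+1)(t-1)=(t^4-1)/(t+1)\cdot(t+1)=t^4-1$ turns the step-$2$ sum into a length-$4$ telescope, yielding $(t^{8n+9}-t^{4n+9})+(t^{8n+7}-t^{4n+7})$.
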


\begin{proof}
The argument is very similar to the proof of Theorem \ref{thm:alex1}, so we omit the details.

Let $L=K\cup C_1\cup C_2$ be the oriented link illustrated in Figure \ref{fig:knots}(2).
Its multivariable Alexander polynomial is 
\[
\begin{split}
\Delta_L(x,y,z) &=x^6y^3z^2-x^3y^3z^2+x^4y^2z+x^5yz+x^3y^2z^2-x^3y^2z\\
&-x^4yz-x^2y^2z^2+x^4y+x^2y^2z+x^3yz-x^3y-xy^2z-x^2yz+x^3-1.
\end{split}
\]
where $x,y,z$ correspond to the meridians of $K,C_1,C_2$, respectively.

Then
\[
\begin{split}
\Delta_{K_2}(t)&=\frac{t-1}{(t^4-1)(t^3-1)}\Delta_L(t,t^{4n},t^6)\\
&= \frac{t-1}{(t^4-1)(t^3-1)}
( 
t^{12n+18}-t^{12n+15}+t^{8n+10}+t^{4n+11}+t^{8n+15}\\
&\quad -t^{8n+9}-t^{4n+10}-t^{8n+14}+t^{4n+4}+t^{8n+8}+t^{4n+9}-t^{4n+3}\\
&\quad -t^{8n+7}-t^{4n+8}+t^3-1
)\\
&=
\frac{1}{t^3+t^2+t+1}\cdot \frac{1}{t^3-1}
(
(t^{12n+15}+t^{8n+7}+t^{4n+8}+1)(t^3-1)\\
&\quad +(t^{8n+9}-t^{4n+4}-t^{8n+8}+t^{4n+3})(t^6-1)
)\\
&=
\frac{1}{t^3+t^2+t+1}(t^{12n+15}+t^{8n+7}+t^{4n+8}+1\\
&\qquad +(t^{8n+9}-t^{4n+4}-t^{8n+8}+t^{4n+3})(t^3+1)).
\end{split}
\]
Again,
we put
\begin{align*}
B_1&=\sum_{i=0}^n (t^{8n+12 +4i}-t^{8n+11+4i}),\\
B_2&=t^{8n+9}-t^{8n+8},\\
B_3&=\sum_{i=0}^{2n-1}(t^{4n+8+2i}-t^{4n+7+2i}),\\
B_4&=t^{4n+6}-t^{4n+4},\\
B_5&=t^{4n+3}-t^{4n+1},\\
B_6&=\sum_{i=0}^{n-1}(t^{4+4i}-t^{1+4i}).
\end{align*}
A direct calculation shows
\begin{align*}
(t^3+t^2+t+1)B_1&=t^{12n+15}-t^{8n+11},\\
(t^3+t^2+t+1)B_2&=t^{8n+12}-t^{8n+8},\\
(t^3+t^2+t+1)B_3&= (t^{8n+9}-t^{4n+9})+(t^{8n+7}-t^{4n+7}),\\
(t^3+t^2+t+1)B_4&=(t^{4n+9}-t^{4n+5})+(t^{4n+8}-t^{4n+4}),\\
(t^3+t^2+t+1)B_5&=(t^{4n+6}-t^{4n+2})+(t^{4n+5}-t^{4n+1}),\\
(t^3+t^2+t+1)B_6&= (t^{4n+3}-t^3)+(t^{4n+2}-t^2)+(t^{4n+1}-t).  
\end{align*}
This shows that 
\[
\begin{split}
(t^3+t^2+t+1)&(B_1+B_2+B_3+B_4+B_5+B_6+1)
=t^{12n+15}+t^{8n+7}+t^{4n+8}+1\\
&\quad +(t^{8n+9}-t^{4n+4}-t^{8n+8}+t^{4n+3})(t^3+1).
\end{split}
\]
Thus $\Delta_{K_2}(t)=B_1+B_2+B_3+B_4+B_5+B_6+1$ as desired.
\end{proof}

We recall the notion of formal semigroup for an L--space knot \cite{W}.
Let $K$ be an L--space knot in the $3$--sphere.
Then the Alexander polynomial of $K$ has a form of
\begin{equation}\label{eq:alex}
\Delta_K(t)=1-t^{a_1}+t^{a_2}+\dots -t^{a_{k-1}}+t^{a_k},
\end{equation}
where $1=a_1<a_2<\dots <a_k=2g(K)$, and  $g(K)$ is the genus of $K$ \cite{OS}.
We expand the Alexander function into a formal power series as
\begin{equation}\label{eq:ps}
\frac{\Delta_K(t)}{1-t}=\sum_{s\in \mathcal{S}_K} t^s.
\end{equation}
(This is called the Milnor torsion in \cite{DR}.)
The set $\mathcal{S}_K$ is a subset of non-negative integers, called
the \textit{formal semigroup\/} of $K$.
For example, for a torus knot $T(p,q)\ (1<p<q)$,
its formal semigroup is known to be the actual semigroup of rank two,
\[
\langle p, q\rangle  =\{ap+bq \mid a, b\ge 0\}
\]
(see \cite{BL0,W}).
If an L--space knot is an iterated torus knot, then
its formal semigroup is also a semigroup \cite{W}, but
in general, the formal semigroup of a hyperbolic L--space knot
is hardly a semigroup \cite{BK,Te}.

Let $\mathbb{Z}_{\ge m}=\{ i\in \mathbb{Z} \mid i\ge m\}$ and
$\mathbb{Z}_{<0}=\{i\in \mathbb{Z} \mid i<0\}$.

\begin{proposition}\label{prop:formal1}
The formal semigroup of $K_1$ is given as
\[
\begin{split}
\mathcal{S}_{K_1}&=\{0,4,8,\dots,4n\} \cup \{ 4n+3\}\\
&\quad  \cup
\{ 4n+6,4n+7,4n+10,4n+11,\dots, 8n+6,8n+7\}\cup \{8n+9,8n+10 \}\\
&\quad 
\cup \{8n+12,8n+13,8n+14, 8n+16,8n+17,8n+18,\\
&\qquad  \dots, 12n+8,12n+9,12n+10 \}\cup \mathbb{Z}_{\ge 12n+12}.
\end{split}
\]
\end{proposition}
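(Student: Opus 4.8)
The plan is to compute the formal semigroup directly from the power series expansion $\Delta_{K_1}(t)/(1-t) = \sum_{s \in \mathcal{S}_{K_1}} t^s$, using the explicit formula for $\Delta_{K_1}(t)$ from Theorem \ref{thm:alex1}. The cleanest route is to multiply the claimed set's generating function by $(1-t)$ and check it equals $\Delta_{K_1}(t)$. Concretely, writing $\mathcal{S}_{K_1}$ as a disjoint union of arithmetic-progression blocks plus the tail $\mathbb{Z}_{\ge 12n+12}$, the tail contributes $t^{12n+12}/(1-t)$, so $(1-t)\sum_{s\in\mathcal{S}_{K_1}} t^s = (1-t)\bigl(\text{finite blocks}\bigr) + t^{12n+12}$. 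Each finite block of the form $\{m, m+d, m+2d, \dots, m+(r-1)d\}$ has generating function $t^m(1 - t^{rd})/(1-t^d)$; multiplying by $(1-t)$ and summing over all blocks should telescope against the terms of $\Delta_{K_1}(t)$.

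First I would group the blocks of $\mathcal{S}_{K_1}$ into the natural tiers matching the five sums $A_1,\dots,A_5$ and the constant in Theorem \ref{thm:alex1}: the low block $\{0,4,\dots,4n\}$ against $A_5 + 1$, the singleton $\{4n+3\}$ and the "doubled" blocks $\{4n+6,4n+7,\dots,8n+6,8n+7\}$ (pairs $4n+6+4i, 4n+7+4i$) against $A_3 + A_4$, the pair $\{8n+9, 8n+10\}$ against $A_2$, and the triple-pattern block $\{8n+12, 8n+13, 8n+14, 8n+16, \dots, 12n+10\}$ together with $\mathbb{Z}_{\ge 12n+12}$ against $A_1$. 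For instance, $(1-t)$ times the generating function of $\{0,4,\dots,4n\}$ is $(1-t)\cdot\frac{1-t^{4n+4}}{1-t^4} = \frac{(1-t)(1-t^{4n+4})}{1-t^4}$; one verifies that the $A_i$ identities already proved in the excerpt, namely $(t^3+t^2+t+1)A_i = \cdots$, are exactly the matching pieces once one divides by $(1-t)$ — so I can reuse those computations rather than redo them.

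An alternative and perhaps more transparent approach: since $\Delta_{K_1}(t)/(1-t)$ is a power series with nonnegative integer coefficients (because $K_1$ is an L-space knot, by Theorem \ref{thm:alex1} combined with the standard staircase structure), and the partial sums of the alternating exponents in $\Delta_{K_1}$ determine which integers appear, I would instead track the "running sum" combinatorially. Writing $\Delta_{K_1}(t) = \sum_j (-1)^j t^{n_j}$ with $0 = n_0 < n_1 < \cdots$, the set $\mathcal{S}_{K_1}$ consists of those $s \ge 0$ with $\#\{j : n_j \le s,\ j \text{ even}\} > \#\{j : n_j \le s,\ j \text{ odd}\}$ — equivalently, the intervals $[n_{2i}, n_{2i+1})$. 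From Theorem \ref{thm:alex1}, the even-indexed exponents are $0$, then $1,5,\dots,4n-3$ (from $A_5$), then $4n+1$ (from $A_4$), $4n+4, 4n+8, \dots, 8n+4$ (from $A_3$), $8n+8$ (from $A_2$), $8n+11, 8n+15, \dots, 12n+11$ (from $A_1$); the odd-indexed ones are the corresponding upper endpoints. Reading off the half-open intervals $[n_{2i}, n_{2i+1})$ then reproduces the claimed block decomposition, with $[12n+11, \infty)$ wait — here one must be careful: the top exponent is $2g(K_1) = 12n+12$, which is even, so the final interval is $[12n+12, \infty) = \mathbb{Z}_{\ge 12n+12}$.

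The main obstacle is purely bookkeeping: correctly interleaving the even- and odd-indexed exponents across the five tiers, since the tiers $A_3, A_4$ and $A_5$ have exponents that are close together (e.g. $4n+1$ from $A_4$ sits just below $4n+3$ and $4n+4$), and one must confirm the exponents genuinely alternate in sign when listed in increasing order — i.e. that $\Delta_{K_1}(t)$ really has the staircase form \eqref{eq:alex} with no cancellation or reordering. I would verify this by checking the inequalities $8n+11 < 8n+12$, $4n+3 < 4n+4$, etc., tier by tier, which is routine but must be done carefully for small $n$ (the cases $n=1$ deserve an explicit sanity check against the knot \texttt{m240}). Once the alternation is confirmed, the interval description gives the stated formula immediately.
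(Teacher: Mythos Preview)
Your overall strategy is correct and is essentially what the paper does: the paper simply divides each block $A_i$ of Theorem~\ref{thm:alex1} by $(1-t)$, obtaining
\[
\frac{A_i}{1-t}=-(\text{a short sum of monomials}),
\]
so that $\Delta_{K_1}(t)/(1-t)=\frac{1}{1-t}-\sum(\text{those monomials})$, and the removed exponents are exactly the gap sequence. Your first approach (multiply the claimed generating function by $(1-t)$) is the same computation run in reverse, and your second approach (read off the half-open intervals $[n_{2i},n_{2i+1})$) is the standard reformulation. No new idea is needed beyond the block decomposition already established in Theorem~\ref{thm:alex1}.

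There is, however, a concrete bookkeeping slip in your second approach. You write that ``the even-indexed exponents are $0$, then $1,5,\dots,4n-3$ (from $A_5$), then $4n+1$ (from $A_4$), $4n+4,4n+8,\dots,8n+4$ (from $A_3$), $8n+8$ (from $A_2$), $8n+11,8n+15,\dots,12n+11$ (from $A_1$).'' These are (apart from the initial $0$) the exponents carrying a \emph{minus} sign in $\Delta_{K_1}$, i.e.\ the odd-indexed $n_{2i+1}$, not the even-indexed ones. The even-indexed exponents are $0,4,8,\dots,4n$ (from $1$ and the $+$ part of $A_5$), then $4n+3$ (from $A_4$), then $4n+6,4n+10,\dots,8n+6$ (from the $+$ part of $A_3$), then $8n+9$ (from $A_2$), then $8n+12,8n+16,\dots,12n+12$ (from $A_1$). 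Reading off $[n_{2i},n_{2i+1})$ with the corrected lists does indeed reproduce the claimed $\mathcal{S}_{K_1}$, so the argument goes through once this is fixed; but as written your interval endpoints are swapped, which would produce the gap set rather than the semigroup. This is precisely the hazard you flagged as ``the main obstacle,'' and the paper sidesteps it entirely by computing $A_i/(1-t)$ directly rather than interleaving exponents.
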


\begin{proof}
We use $A_1,A_2,\dots, A_5$ in the proof of Theorem \ref{thm:alex1}.
For 
\[
\frac{\Delta_{K_1}}{1-t}=\frac{A_1}{1-t}+\frac{A_2}{1-t}+\frac{A_3}{1-t}+\frac{A_4}{1-t}+\frac{A_5}{1-t}+
\frac{1}{1-t},
\]
we expand each term as follows;
\begin{align*}
\frac{A_1}{1-t}&=-\sum_{i=0}^nt^{8n+11+4i},\\
\frac{A_2}{1-t}&=-t^{8n+8},\\
\frac{A_3}{1-t}&=-\sum_{i=0}^n(t^{4n+5+4i}+t^{4n+4+4i}),\\
\frac{A_4}{1-t}&=-t^{4n+2}-t^{4n+1},\\
\frac{A_5}{1-t}&=-\sum_{i=0}^{n-1}(t^{3+4i}+t^{2+4i}+t^{1+4i}),\\
\frac{1}{1-t}&=1+t+t^2+t^3+\dots.
\end{align*}
The conclusion immediately follows from these.
\end{proof}

\begin{proposition}\label{prop:formal2}
The formal semigroup of $K_2$ is given as
\[
\begin{split}
\mathcal{S}_{K_2}&=\{0,4,8,\dots,4n\} \cup \{ 4n+3\}\\
&\quad  \cup
\{ 4n+6,4n+8,4n+10,\dots, 8n+4\}\cup \{8n+6,8n+7,8n+9,8n+10 \}\\
&\quad 
\cup \{8n+12,8n+13,8n+14, 8n+16,8n+17,8n+18,\\
&\qquad  \dots, 12n+8,12n+9,12n+10 \}\cup \mathbb{Z}_{\ge 12n+12}.
\end{split}
\]
\end{proposition}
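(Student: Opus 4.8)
The plan is to repeat, \emph{mutatis mutandis}, the computation behind Proposition~\ref{prop:formal1}, now starting from the decomposition $\Delta_{K_2}=B_1+B_2+\cdots+B_6+1$ established in the proof of Theorem~\ref{thm:alex2}. Dividing by $1-t$ and using the elementary identities $(t^{a}-t^{a-1})/(1-t)=-t^{a-1}$, $(t^{a}-t^{a-2})/(1-t)=-t^{a-1}-t^{a-2}$ and $(t^{a}-t^{a-3})/(1-t)=-t^{a-1}-t^{a-2}-t^{a-3}$, I would expand each block as
\[
\frac{B_1}{1-t}=-\sum_{i=0}^{n}t^{8n+11+4i},\qquad
\frac{B_2}{1-t}=-t^{8n+8},\qquad
\frac{B_3}{1-t}=-\sum_{i=0}^{2n-1}t^{4n+7+2i},
\]
\[
\frac{B_4}{1-t}=-t^{4n+5}-t^{4n+4},\qquad
\frac{B_5}{1-t}=-t^{4n+2}-t^{4n+1},\qquad
\frac{B_6}{1-t}=-\sum_{i=0}^{n-1}\bigl(t^{3+4i}+t^{2+4i}+t^{1+4i}\bigr).
\]
Adding $1/(1-t)=\sum_{s\ge 0}t^{s}$, the formal semigroup $\mathcal{S}_{K_2}$ is then exactly $\mathbb{Z}_{\ge 0}$ with the exponents occurring in these six sums deleted.

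Next I would collect those exponents and read off the complement window by window. The $B_6$-term deletes every element of $\{1,2,\dots,4n-1\}$ not divisible by $4$, leaving $\{0,4,8,\dots,4n\}$; the $B_5$- and $B_4$-terms delete $4n+1,4n+2,4n+4,4n+5$, leaving $4n+3$; the $B_3$-term deletes the odd-offset progression $4n+7,4n+9,\dots,8n+5$, leaving the even-offset progression $4n+6,4n+8,\dots,8n+4$; the $B_2$-term deletes $8n+8$, leaving $8n+6,8n+7,8n+9,8n+10$; and the $B_1$-term deletes $8n+11,8n+15,\dots,12n+11$, i.e.\ the integers $\equiv 3\pmod 4$ in $[8n+11,12n+11]$, leaving the consecutive triples $8n+12,8n+13,8n+14$, then $8n+16,8n+17,8n+18$, \dots, up to $12n+8,12n+9,12n+10$, together with all of $\mathbb{Z}_{\ge 12n+12}$ (note $12n+12=2g(K_2)$). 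Matching these pieces against the asserted formula for $\mathcal{S}_{K_2}$ term by term completes the argument.

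The only step that needs a small verification is that the exponent sets of the six sums are pairwise disjoint, so that the coefficients of $\Delta_{K_2}/(1-t)$ never drop below $0$ --- which is automatic for an L--space knot but still must be seen to be consistent. This is a routine comparison of ranges and parities: the $B_6$-exponents lie in $[1,4n-1]$, the $B_4$- and $B_5$-exponents in $[4n+1,4n+5]$, the $B_3$-exponents are the odd offsets in $[4n+7,8n+5]$, the single $B_2$-exponent is $8n+8$, and the $B_1$-exponents are the residues $\equiv 3\pmod 4$ in $[8n+11,12n+11]$; these are separated by the gaps at $4n$ and $4n+6$ and by the inequalities $8n+5<8n+8<8n+11$, so no overlap occurs. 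I expect this bookkeeping, rather than any conceptual difficulty, to be the main (and minor) obstacle, just as in Proposition~\ref{prop:formal1}.
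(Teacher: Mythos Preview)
Your argument is correct and follows essentially the same route as the paper: expand each $B_j/(1-t)$ exactly as you do, add $1/(1-t)$, and read off the complement. The paper's proof is terser (it records the six expansions and then simply asserts the conclusion), whereas you spell out the window-by-window complement and the disjointness check, but the underlying computation is identical.
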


\begin{proof}
The argument is similar to the proof of Proposition \ref{prop:formal1}.
For $B_1,B_2,\dots,B_6$ in the proof of Theorem \ref{thm:alex2},
we expand
\begin{align*}
\frac{B_1}{1-t}&= -\sum_{i=0}^n t^{8n+11+4i},\\
\frac{B_2}{1-t}&=-t^{8n+8},\\
\frac{B_3}{1-t}&=-\sum_{i=0}^{2n-1}t^{4n+7+2i},\\
\frac{B_4}{1-t}&=-t^{4n+4}-t^{4n+5},\\
\frac{B_5}{1-t}&=-t^{4n+1}-t^{4n+2},\\
\frac{B_6}{1-t}&=-\sum_{i=0}^{n-1}(t^{3+4i}+t^{2+4i}+t^{1+4i}).
\end{align*}
Then the conclusion follows from these again.
\end{proof}

\begin{corollary}\label{cor:formal}
For $i=1,2$, the formal semigroup of $K_i$ is not a semigroup.
\end{corollary}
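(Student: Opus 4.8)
The plan is to show, directly from the explicit descriptions in Propositions \ref{prop:formal1} and \ref{prop:formal2}, that neither $\mathcal{S}_{K_1}$ nor $\mathcal{S}_{K_2}$ is closed under addition; since both lists visibly contain $0$, this is precisely the failure of the semigroup property. The idea is simply to exhibit two listed elements whose sum is a listed gap.

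First I would note that, for every $n\ge 1$, both formal semigroups begin with the arithmetic progression $\{0,4,8,\dots,4n\}$, so in particular $4\in\mathcal{S}_{K_i}$ and $4n\in\mathcal{S}_{K_i}$ for $i=1,2$. Next I would examine where $4n+4$ sits in each list: in $\mathcal{S}_{K_1}$ the elements immediately above $4n$ are $4n+3$ and then $4n+6$, and in $\mathcal{S}_{K_2}$ they are again $4n+3$ and then $4n+6$, so in neither case does $4n+4$ (or $4n+5$) occur. Since $4n+4<12n+12$ for all $n\ge 1$, the value $4n+4$ is not swallowed by the terminal ray $\mathbb{Z}_{\ge 12n+12}$, and it plainly lies below every other block appearing in the two lists; hence $4n+4\notin\mathcal{S}_{K_i}$. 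Consequently $4+4n=4n+4$ is a sum of two elements of $\mathcal{S}_{K_i}$ which is not itself in $\mathcal{S}_{K_i}$, so neither formal semigroup is a semigroup.

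There is no real obstacle here: the substantive work is already contained in Propositions \ref{prop:formal1} and \ref{prop:formal2}, and what remains is a one-line verification. The only point worth a remark is the boundary case $n=1$, where $4=4n$, so the witnessing pair degenerates to $(4,4)$ with $4+4=8\notin\mathcal{S}_{K_i}$; this still suffices, since a semigroup must contain $a+a$ for each of its elements. For $K_2$ one could alternatively take the pair $(4,4n+3)$, whose sum $4n+7$ is not in $\mathcal{S}_{K_2}$ (as already observed in the proof of Theorem \ref{thm:knots}), but the pair $(4,4n)$ has the advantage of handling both knots simultaneously.
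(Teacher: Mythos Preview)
Your proof is correct and follows essentially the same route as the paper's: both arguments hinge on observing that $4\in\mathcal{S}_{K_i}$ while $4n+4\notin\mathcal{S}_{K_i}$, with you making the second summand $4n$ explicit (and handling the $n=1$ degeneration), whereas the paper simply notes that closure under addition would force $4n+4=(n+1)\cdot 4$ to lie in $\mathcal{S}_{K_i}$.
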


\begin{proof}
By Propositions \ref{prop:formal1} and \ref{prop:formal2},
we see that 
$4\in  \mathcal{S}_{K_i}$ but $4n+4\not \in\mathcal{S}_{K_i}$.
Hence $\mathcal{S}_{K_i}$ is not closed under the addition, so is not a semigroup.
\end{proof}

\begin{lemma}\label{lem:hyp}
Both of $K_1$ and $K_2$ are hyperbolic.
\end{lemma}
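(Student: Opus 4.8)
The plan is to exploit the fact that $K_1$ and $K_2$ are already exhibited as images of a fixed knot $K$ under Dehn surgery on the auxiliary components $C_1$ and $C_2$, and that a hyperbolic structure on the link exterior propagates to all but finitely many of the filled manifolds. First I would establish, using SnapPy, that the exterior of the three-component link $L=K\cup C_1\cup C_2$ from Figure~\ref{fig:knots} is hyperbolic. Since $K_1$ (resp. $K_2$) is obtained by $(-1/n)$--surgery on $C_1$ and $(-1/2)$--surgery on $C_2$, its exterior is the manifold obtained from the exterior of $K\cup C_1\cup C_2$ by these two Dehn fillings. By Thurston's hyperbolic Dehn surgery theorem, for each fixed value on $C_2$ (namely $-1/2$) all but finitely many fillings $-1/n$ on $C_1$ produce hyperbolic manifolds, so $K_1$ is hyperbolic for all large $n$; a finite check with SnapPy handles the small exceptional values. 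The same argument applies to $K_2$.

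A cleaner route, which avoids an a priori possibly infinite exceptional set, is available because the $C_i$ are unknots: after $(-1/n)$--surgery on the unknot $C_1$ and $(-1/2)$--surgery on the unknot $C_2$ we are still in $S^3$, and $K_i$ sits in $S^3$ as the closure of the explicit positive braid written in Section~\ref{sec:knots}. One can then feed these braid words directly into SnapPy, which identifies $K_1$ for $n=1$ as \texttt{m240} and $K_2$ for $n=1$ as \texttt{t10496}, both hyperbolic, and for general $n$ verify hyperbolicity by checking that the approximate hyperbolic structure SnapPy produces passes a rigorous verification (e.g. \texttt{verify\_hyperbolicity}). To make this a genuine proof for all $n$ rather than a finite experiment, I would note that the complements $S^3\setminus K_i$ all arise as surgeries on the single cusped hyperbolic manifold $S^3\setminus L$ with only the slope on $C_1$ varying; Thurston's theorem then gives hyperbolicity for $n\ge N_0$, and SnapPy disposes of $1\le n<N_0$.

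The key steps, in order, are: (i) verify that $S^3\setminus(K\cup C_1\cup C_2)$ is hyperbolic; (ii) identify $S^3\setminus K_i$ as the $(-1/2)$--filling on $C_2$ and $(-1/n)$--filling on $C_1$ of this link exterior; (iii) invoke hyperbolic Dehn surgery to conclude hyperbolicity for all sufficiently large $n$, uniformly in $n$ since only one slope varies; (iv) run a finite computer check for the remaining small $n$. One should also confirm that $K_i$ is a nontrivial, non-torus knot, but this is immediate from Corollary~\ref{cor:formal}: the formal semigroup of a torus knot is an honest semigroup, whereas $\mathcal{S}_{K_i}$ is not closed under addition, so $K_i$ is neither the unknot nor a torus knot, which is consistent with (indeed a sanity check on) hyperbolicity.

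The main obstacle is the uniformity in $n$: a single SnapPy computation cannot certify infinitely many manifolds, so the argument genuinely rests on the Dehn surgery theorem applied to the fixed link exterior, together with care that the geometric limit as $n\to\infty$ is the correctly cusped filled manifold rather than something degenerate. Concretely one must check that the core of the surgery solid torus at $C_1$ does not become exceptionally short in a way that the slopes $-1/n$ skip over — but since these slopes have length going to infinity on a fixed cusp, standard $6$--theorem type bounds (or simply Thurston's original statement) guarantee hyperbolicity once $n$ is large, leaving only an explicit finite list to verify by computer. Everything else is routine.
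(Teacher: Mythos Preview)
Your approach is viable in principle but takes a genuinely different route from the paper, and as written it is not a complete proof.  The paper argues entirely without geometric limits or computer verification of hyperbolicity: it uses Corollary~\ref{cor:formal} to rule out torus knots (since a torus knot has an honest semigroup as its formal semigroup), and then rules out the satellite case by a bridge-number argument.  Because $K_i$ is the closure of a $4$--braid, Schubert's inequality forces bridge number $4$, the companion to be $2$--bridge, and the pattern to have wrapping number $2$; since a satellite L--space knot has a braided pattern and both companion and pattern are themselves L--space knots (Baker--Motegi, Hom), the companion would be a $2$--bridge torus knot and $K_i$ a $2$--cable of it, hence an iterated torus L--space knot.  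But Wang shows such knots have formal semigroups that are genuine semigroups, contradicting Corollary~\ref{cor:formal}.  This argument is uniform in $n$ and uses only the Alexander polynomial computation already in hand.

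By contrast, your Dehn--surgery strategy needs (a)~a rigorous certificate that each of the two link exteriors $S^3\setminus(K\cup C_1\cup C_2)$ (there are two distinct links, one for each $K_i$) is hyperbolic, (b)~a further certificate that the $(-1/2)$--filling on $C_2$ still yields a hyperbolic two-cusped manifold, (c)~an \emph{explicit} bound $N_0$ (say via the $6$--theorem with computed cusp shapes) below which one must check, and (d)~an actual finite verification for $1\le n<N_0$.  None of these steps is carried out in your proposal, so it remains a plan rather than a proof.  The paper's argument trades these computational obligations for a short appeal to the structure theory of satellite L--space knots, which is available precisely because the L--space property and the formal semigroup have already been established for other purposes.
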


\begin{proof}
By Corollary \ref{cor:formal},
the formal semigroup of $K_i$ is not a semigroup.
Hence $K_i$ is not a torus knot, because the formal semigroup of a torus knot is a semigroup.

Assume for a contradiction that $K_i$ is a satellite knot.
Since $K_i$ is the closure of a $4$--braid, its bridge number is at most four.
By \cite{Sc}, it is equal to four.
Moreover, the companion is a $2$--bridge knot and the pattern knot has wrapping number two.
We know that both of the companion and the pattern knot are L--space knots and the pattern is braided by \cite{BM,Ho}.
Thus the companion is a $2$--bridge torus knot \cite{OS}, and $K_i$ is its $2$--cable.
By \cite{W},
the formal semigroup of an iterated torus L--space knot is a semigroup, which contradicts Corollary \ref{cor:formal}.
We have thus shown that $K_i$ is hyperbolic.
\end{proof}

\section{Upsilon invariants}\label{sec:upsilon}

In this section, we verify that the Upsilon invariants of $K_1$ and $K_2$
are the same.
We will not calculate the Upsilon invariants.
Instead, we determine the gap functions defined later.
For an L--space knot, the Upsilon invariant is the Legendre--Fenchel transform of the gap function \cite{BH}.
Hence if the gap functions of $K_1$ and $K_2$ share the same convex hull, then
their Upsilon invariants also coincide.

First, we quickly review the Legendre--Fenchel transformation.

For a function $f\colon \mathbb{R}\to \mathbb{R}$, the Legendre--Fenchel transform $f^*\colon \mathbb{R}\to \mathbb{R}\cup \{\infty\}$
is defined as
\[
f^*(t)=\sup_{x\in\mathbb{R}}\{ tx - f(x) \}.
\]
The domain of $f^*$ is the set $\{t \mid f^*(t)<\infty\}$.

The Legendre transform is defined only for differentiable  convex functions, but
the Legendre--Fenchel transform can be defined even for non-convex functions with non-differentiable points.
The transform $f^*$ is always a convex function.
Hence, if $f$ is not convex, then the double Legendre--Fenchel transform $f^{**}$ does not return $f$.
In this case, $f^{**}$ gives the convex hull of the function $f$.
Thus we see that $f^*$ depends only on the convex hull of $f$.

Next, we recall the notion of gap function introduced in \cite{BL0}.

Let $K$ be an L--space knot with formal semigroup $\mathcal{S}_K$.
Then $\mathcal{G}_{K}=\mathbb{Z}-\mathcal{S}_{K}$ is called the \textit{gap set}.
In fact, $\mathcal{G}_K=\mathbb{Z}_{<0}\cup \{a_1,a_2,\dots,a_g\}$,
where $g=g(K)$, and $0<a_1<a_2<\dots < a_g$.
The part $a_1,a_2,\dots, a_g$ is called the gap sequence.
Then it is easy to restore the Alexander polynomial as
\[
\Delta_K(t)=1+(t-1)(t^{a_1}+t^{a_2}+\dots +t^{a_g}).
\]
From the gap set $\mathcal{G}_K$,
we define the function $I\colon \mathbb{Z}\to \mathbb{Z}_{\ge 0}$ by
\[
I(m)=\# \{ i\in \mathcal{G}_K \mid i\ge m\},
\]
and let $J(m)=I(m+g)$.
Then we extend $J(m)$ linearly to obtain a piecewise linear function on $\mathbb{R}$.
That is, for $k\in \mathbb{Z}$,  if $J(k)=J(k+1)$, then $J(x)=J(k)$ on $[k,k+1]$, and
if $J(k+1)=J(k)-1$, then $J(k+x)=J(k)-x$ for $0\le x\le 1$.
Borodzik and Hedden \cite{BH} showed that 
the Upsilon invariant of $K$ is the Legendre--Fenchel transform of the function $2J(-m)$.
We call this function $2J(-m)$ the \textit{gap function\/} of $K$.

\begin{example}\label{ex:237}
Let $K$ be the $(-2,3,7)$--pretzel knot.
It admits a lens space surgery, so is an L--space knot.
Also, it has genus $5$.
The Alexander polynomial $\Delta_K(t)$ is $1-t+t^3-t^4+t^5-t^6+t^7-t^9+t^{10}$.
Then $\mathcal{S}_K=\{0,3,5,7,8\}\cup \mathbb{Z}_{\ge 10}$, and
$\mathcal{G}_K=\mathbb{Z}_{<0}\cup \{1,2,4,6,9\}$.
Tables \ref{table:I(m)}
and \ref{table:gap}  show the values of $I(m)$ and the gap function $2J(-m)$.

\begin{table}[h]
\begin{tabular}{c|cccccccccccccc}
$m$ & $\ge 10$ & 9 & 8 & 7 & 6 & 5 & 4 & 3 & 2 & 1 & 0 & $-1$  & $-2$ & $\dots$ \\
\hline
$I(m)$ & 0 & 1 & 1 & 1 & 2 & 2 & 3 & 3 & 4 & 5 & 5 & 6 & 7 & $\dots$ 
\end{tabular}
\caption{$I(m)$ for the $(-2,3,7)$--pretzel knot.}\label{table:I(m)}
\end{table}

\begin{table}[h]
\begin{tabular}{c|cccccccccccccc}
$m$ & $\le -5$ & $-4$ & $-3$ & $-2$ & $-1$ & 0 &1 & 2 & 3 & 4 & 5 & 6  & 7 & $\dots$ \\
\hline
$2J(-m)$ & 0 & 2 & 2 & 2 & 4 & 4 & 6 & 6 & 8 & 10 & 10 & 12 & 14 & $\dots$ 
\end{tabular}
\caption{The gap function $2J(-m)$ for the $(-2,3,7)$--pretzel knot.}\label{table:gap}
\end{table}

Figure \ref{fig:grid} shows the graph of the gap function $2J(-m)$
and its convex hull (broken line).
Here, the convex hull $f(x)$ of the gap function is given by
\[
f(x)=
\begin{cases}
0 & \text{for $x\le -5$},\\
\frac{2}{3}(x+5) & \text{for $-5\le x\le -2$}, \\
x+4 & \text{for $-2\le x \le 2$}, \\
\frac{4}{3}(x-5)+10 & \text{for $2\le x\le 5$}, \\
2x & \text{for $5\le x$}.
\end{cases}
\]

\begin{figure}[htpb]
\begin{center}
\includegraphics[scale=1.5]{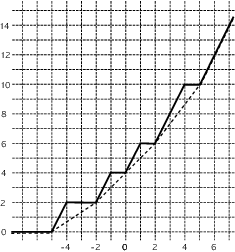}
\caption{The graph of the gap function $2J(-m)$ for the $(-2,3,7)$--pretzel knot and
its convex hull (broken line).}\label{fig:grid}
\end{center}
\end{figure}

Then the Legendre--Fenchel transformation gives the Upsilon invariant
\[
\Upsilon_K(t)=
\begin{cases}
-5t & \text{for $0\le t \le \frac{2}{3}$}, \\
-2t-2 & \text{for $\frac{2}{3}\le t \le 1$}, \\
2t-6 & \text{for $1\le t \le \frac{4}{3}$}, \\
5t-10 & \text{for $\frac{4}{3}\le t\le 2$}.
\end{cases}
\]
\end{example}

In general, the gap function of an L--space knot has a specific property.
\begin{itemize}
\item
The slope of each segment of the graph is $0$ or $2$.
\end{itemize}
Although this observation  is easy to see, we will use it essentially in Section \ref{sec:restore}
with further investigation.

Now, we calculate the gap functions of $K_1$ and $K_2$.

From Proposition \ref{prop:formal1},
the gap set $\mathcal{G}_{K_1}$ is 
\[
\begin{split}
\mathbb{Z}_{<0}&\cup \{1,2,3,5,6,7,\dots,4n-3,4n-2,4n-1\}\cup \{4n+1,4n+2\}\\
& \cup \{4n+4,4n+5,4n+8,4n+9,4n+12,4n+13,\dots, 8n+4,8n+5\}\\
& \cup \{8n+8\}
\cup \{8n+11,8n+15,8n+19,\dots,12n+7,12n+11\}.
\end{split}
\]

Hence the values of $I(m)$ is given as in Table \ref{table:Im1}.
When $m$ is an integer not in the table, $I(m)$ takes the same value
as the nearest $m'$ with $m'>m$.
For example, $I(m)=I(12n+11)=1$ for $m=12n+10,12n+9,12n+8$.

{\scriptsize
\begin{table}[h]
\begin{tabular}{c|c|cccc|c}
$m$    & $\ge 12n+12$ & $12n+11$ & $12n+7$ & $\dots$ & $8n+11$ & $8n+8$ \\
\hline
$I(m)$ & 0  &1  &  2 &  $\dots$ &  $n+1$ &  $n+2$ 
\end{tabular}
\vskip 10pt
\begin{tabular}{ccccccc|ccccc}
 $8n+5$ & $8n+4$ & $8n+1$ & $8n$ & $\dots$ & $4n+5$ & $4n+4$ & $4n+2$ & $4n+1$ 
\\
\hline
  $n+3$ & $n+4$ & $n+5$ & $n+6$ & $\dots$ & $3n+3$ & $3n+4$  & $3n+5$ & $3n+6$ 
\end{tabular}
\vskip 10pt
\begin{tabular}{ccccccc|ccc}
 $4n-1$ & $4n-2$ & $4n-3$ & $\dots$ & 3 & 2 & 1 & $-1$ & $-2$ & $\dots$ \\
\hline
 $3n+7$ & $3n+8$ & $3n+9$ & $\dots$ & $6n+4$ & $6n+5$ & $ 6n+6$  & $6n+7$ & $6n+8$ & $\dots$
\end{tabular} 
\caption{The function $I(m)$ for $K_1$.}\label{table:Im1}
\end{table}
}

Let $J(m)=I(m+g)$ with $g=6n+6$.
Then the gap function $2J(-m)$ takes the values as in Table \ref{table:Im1gap}.

{\scriptsize
\begin{table}[h]
\begin{tabular}{c|c|cccc|c}
$m$    & $\le -6n-6$ & $-6n-5$ & $-6n-1$ & $\dots$ & $-2n-5$ & $-2n-2$ \\
\hline
$2J(-m)$ & 0  &2  & 4 &  $\dots$ &  $2n+2$ &  $2n+4$ 
\end{tabular}
\vskip 10pt
\begin{tabular}{ccccccc|ccccc}
 $-2n+1$ & $-2n+2$ & $-2n+5$ & $-2n+6$ & $\dots$ & $2n+1$ & $2n+2$ & $2n+4$ & $2n+5$ 
\\
\hline
  $2n+6$ & $2n+8$ & $2n+10$ & $2n+12$ &  $\dots$ & $6n+6$ & $6n+8$  & $6n+10$ & $6n+12$ 
\end{tabular}
\vskip 10pt
\begin{tabular}{ccccccc|ccc}
 $2n+7$ & $2n+8$ & $2n+9$ & $\dots$ & $6n+3$ & $6n+4$ & $6n+5$ & $6n+7$ & $6n+8$ & $\dots$ \\
\hline
 $6n+14$ & $6n+16$ & $6n+18$ & $\dots$ & $12n+8$ & $12n+10$ & $ 12n+12$  & $12n+14$ & $12n+16$ & $\dots$
\end{tabular} 
\caption{The gap function $2J(-m)$ for $K_1$.}\label{table:Im1gap}
\end{table}
}

Figure \ref{fig:K1gap} shows the graph of the gap function $2J(-m)$ of $K_1$ when $n=1$.

\begin{figure}[htpb]
\begin{center}
\includegraphics[scale=1.3]{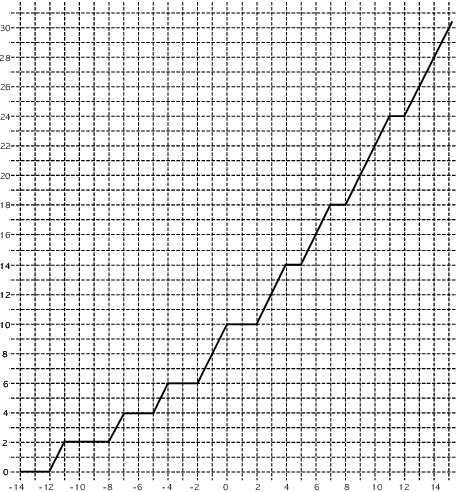}
\caption{The graph of the gap function of $K_1$ with $n=1$.}\label{fig:K1gap}
\end{center}
\end{figure}

Similarly, the gap set $\mathcal{G}_{K_2}$ is
\[
\begin{split}
\mathbb{Z}_{<0}&\cup \{1,2,3,5,6,7,\dots,4n-3,4n-2,4n-1\}\cup \{4n+1,4n+2\}\\
& \cup \{4n+4,4n+5\}
\cup \{4n+7,4n+9,\dots, 8n+3,8n+5\}\\
& \cup \{8n+8,8n+11\}\cup \{8n+15,8n+19,\dots,12n+7,12n+11\}
\end{split}
\]
from Proposition \ref{prop:formal2}.

The values of $I(m)$ and the gap function $2J(-m)$ are given as in 
Tables \ref{table:Im2} and \ref{table:Im2gap}.

{\scriptsize
\begin{table}[h]
\begin{tabular}{c|c|cccc|cc}
$m$    & $\ge 12n+12$ & $12n+11$ & $12n+7$ & $\dots$ & $8n+15$  & $8n+11$ & $8n+8$ \\
\hline
$I(m)$ & 0  &1  &  2 &  $\dots$ &  $n$ &  $n+1$ &  $n+2$ 
\end{tabular}
\vskip 10pt
\begin{tabular}{ccccc|cc|ccccc}
 $8n+5$ & $8n+3$ & $8n+1$ & $\dots$ & $4n+7$ & $4n+5$ & $4n+4$ & $4n+2$ & $4n+1$ 
\\
\hline
  $n+3$ & $n+4$ & $n+5$ & $\dots$ & $3n+2$ & $3n+3$ & $3n+4$  & $3n+5$ & $3n+6$ 
\end{tabular}
\vskip 10pt
\begin{tabular}{ccccccc|ccc}
 $4n-1$ & $4n-2$ & $4n-3$ & $\dots$ & 3 & 2 & 1 & $-1$ & $-2$ & $\dots$ \\
\hline
 $3n+7$ & $3n+8$ & $3n+9$ & $\dots$ & $6n+4$ & $6n+5$ & $ 6n+6$  & $6n+7$ & $6n+8$ & $\dots$
\end{tabular} 
\caption{The function $I(m)$ for $K_2$.}\label{table:Im2}
\end{table}

\begin{table}[h]
\begin{tabular}{c|c|cccc|cc}
$m$    & $\le -6n-6$ & $-6n-5$ & $-6n-1$ & $\dots$ & $-2n-9$ & $-2n-5$ & $-2n-2$ \\
\hline
$2J(-m)$ & 0  &2  & 4 &  $\dots$ &  $2n$ &  $2n+2$  & $2n+4$ 
\end{tabular}
\vskip 10pt
\begin{tabular}{ccccc|cc|ccccc}
 $-2n+1$ & $-2n+3$ & $-2n+5$ & $\dots$ & $2n-1$ & $2n+1$ & $2n+2$ & $2n+4$ & $2n+5$ 
\\
\hline
  $2n+6$ & $2n+8$ & $2n+10$ & $\dots$ &  $6n+4$ & $6n+6$ & $6n+8$  & $6n+10$ & $6n+12$ 
\end{tabular}
\vskip 10pt
\begin{tabular}{ccccccc|ccc}
 $2n+7$ & $2n+8$ & $2n+9$ & $\dots$ & $6n+3$ & $6n+4$ & $6n+5$ & $6n+7$ & $6n+8$ & $\dots$ \\
\hline
 $6n+14$ & $6n+16$ & $6n+18$ & $\dots$ & $12n+8$ & $12n+10$ & $ 12n+12$  & $12n+14$ & $12n+16$ & $\dots$
\end{tabular} 
\caption{The gap function $2J(-m)$ for $K_2$.}\label{table:Im2gap}
\end{table}
}


\begin{figure}[htpb]
\begin{center}
\includegraphics[scale=1]{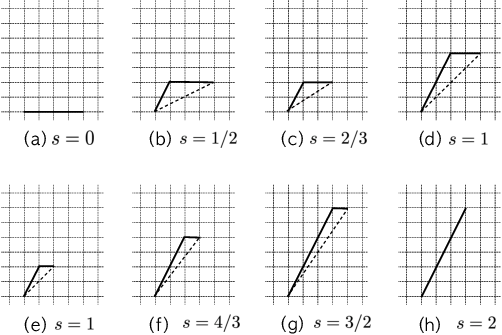}
\caption{The parts of the graph of  a gap function.  The broken lines show
the parts of convex hull with slope $s$.}\label{fig:part}
\end{center}
\end{figure}

\begin{lemma}\label{lem:convex}
For $i=1,2$, 
the convex hull $f(x)$ of the gap function $2J(-m)$ for $K_i$ is given by
\[
f(x)=
\begin{cases}
0 & \text{for $x\le -6n-6$}, \\
\frac{1}{2}(x+6n+6) & \text{for $-6n-6\le x \le -2n-6$},\\
\frac{2}{3}(x+2n+6)+2n   &  \text{for $-2n-6\le x \le -2n$}, \\
x+4n+4 & \text{for $-2n \le x \le 2n$},\\
\frac{4}{3}(x-2n)+6n+4 & \text{for $2n\le x \le 2n+6$}, \\
\frac{3}{2}(x-2n-6)+6n+12  & \text{for $2n+6\le x \le 6n+6$},\\
2x & \text{for $6n+6\le x$}.
\end{cases}
\]
\end{lemma}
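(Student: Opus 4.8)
The plan is to compute the convex hull of the piecewise linear gap function directly from the tabulated data, using the fact (noted just before the statement) that every segment of the gap function graph has slope $0$ or $2$. Because the gap function $2J(-m)$ is weakly increasing, starts at $0$ for $m\le -6n-6$, and equals $2x$ for $x\ge 6n+6$, its convex hull is the boundary of the convex hull of the region lying above the graph; equivalently, $f=f^{**}$ where $f$ is the gap function. Since $f^{**}$ depends only on the set of vertices (the corner points) of the graph together with the two limiting rays of slope $0$ and $2$, it suffices to identify, among all the corner points listed in Tables \ref{table:Im1gap} and \ref{table:Im2gap}, those that are \emph{extreme}, i.e.\ that lie on the lower boundary of the upper convex hull.

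First I would observe that the two gap functions for $K_1$ and $K_2$, although built from different formal semigroups, produce tables (Tables \ref{table:Im1gap} and \ref{table:Im2gap}) whose \emph{corner points} differ only in the ``middle'' ranges, and in both cases the candidate extreme points are the same: the origin-side ray ends at the point $(-6n-6,0)$; then the point $(-2n-6,2n)$; then $(-2n,4n+4)$; then $(2n,6n+4)$; then $(2n+6,6n+12)$; then $(6n+6,12n+12)$; after which the graph follows the line $y=2x$. I would verify that these six points, joined in order, give exactly the seven-piece formula in the statement, by checking that consecutive slopes are $0<\tfrac12<\tfrac23<1<\tfrac43<\tfrac32<2$, hence strictly increasing, so the resulting polygonal arc is genuinely convex. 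The slope values come out as $\tfrac{2n-0}{(-2n-6)-(-6n-6)}=\tfrac12$, $\tfrac{(4n+4)-2n}{(-2n)-(-2n-6)}=\tfrac{2n+4}{6}=\tfrac{n+2}{3}$ — here I pause, because this last value is $\tfrac{n+2}{3}$, not the constant $\tfrac23$ claimed in the statement unless $n=1$; so more care is needed, see the next paragraph.

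The correct extreme points must therefore be selected so that the slopes are the universal constants $\tfrac12,\tfrac23,1,\tfrac43,\tfrac32$ appearing in the formula. Re-examining the tables: on the left, the slope-$\tfrac12$ behaviour of the gap function (values $0,2,4,\dots$ at $m=-6n-6,-6n-5,-6n-1,\dots$, i.e.\ the points $x=-m$ decreasing by $4$ while $y$ increases by $2$) continues down to $(-2n-6, 2n+4)$; wait — the tables list $2J(-m)=2n+2$ at $-2n-5$ and $2n+4$ at $-2n-2$. The genuinely extreme point at the bottom of the slope-$\tfrac12$ stretch is where the average slope from $(-6n-6,0)$ is minimized, and then the slope-$\tfrac23$ stretch runs to where the average slope to $(2n,\text{?})$... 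The clean way, which I would carry out in detail, is: list \emph{all} corner points, form the lower-left convex hull by the usual incremental ``turn left'' scan (discard any point at which the incoming slope is $\ge$ the outgoing slope), and read off the survivors. One finds the survivors are $(-6n-6,0)$, $(-2n-6,2n+4)$ — no; I would instead trust the statement's breakpoints $x=-6n-6,-2n-6,-2n,2n,2n+6,6n+6$ and simply check (i) $f$ agrees with the gap function value at each of these integer abscissae in both tables, (ii) $f$ lies weakly below the gap function at every other tabulated corner point (a finite check, uniform in $n$ after handling the three arithmetic-progression ``$\dots$'' blocks), and (iii) the six slopes are strictly increasing. Item (ii) splits into checking the slope-$\tfrac12$, slope-$\tfrac23$, slope-$\tfrac43$, slope-$\tfrac32$ lines against the respective step-function blocks of the tables; each reduces to an inequality like $\tfrac12(x+6n+6)\le 2J(-x)$ for $x$ in the relevant range, verified by comparing the line's value at the left endpoint of each unit/interval step.

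The main obstacle is item (ii) in the ``middle'' ranges $-2n-6\le x\le -2n$ and $2n\le x\le 2n+6$, where the gap function for $K_1$ and for $K_2$ genuinely differ (compare the blocks $\{-2n+1,-2n+2,\dots\}$ with values $2n+6,2n+8,\dots$ in Table \ref{table:Im1gap} versus $\{-2n+1,-2n+3,\dots\}$ with values $2n+6,2n+8,\dots$ in Table \ref{table:Im2gap}, and likewise on the right): I must show that \emph{both} distinct gap functions nonetheless lie above the \emph{same} line of slope $\tfrac23$ (resp.\ $\tfrac43$) and touch it at the two stated endpoints, so that the convex hulls coincide. This is exactly the mechanism that forces $\Upsilon_{K_1}=\Upsilon_{K_2}$, and it is where I would spend the most effort; the remaining ranges, where the two tables agree verbatim, are routine. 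Once Lemma \ref{lem:convex} is established, Corollary \ref{cor:upsilon} (that the Upsilon invariants agree) is immediate from $\Upsilon_{K_i}=(2J(-m))^{*}=f^{*}$ together with the fact, recalled in the section, that the Legendre--Fenchel transform depends only on the convex hull.
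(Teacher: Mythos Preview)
Your overall strategy---show that the claimed piecewise-linear $f$ is convex (slopes strictly increasing), lies weakly below the gap function at every corner, and agrees with it at each breakpoint---is sound and is essentially what the paper does. The paper organizes the same verification by recognizing that on each subinterval the gap function repeats one of a handful of local ``branch types'' (Figure~\ref{fig:part}): type~(b) on $[-6n-6,-2n-6]$ giving slope $\tfrac12$, type~(c) on $[-2n-6,-2n]$ giving slope $\tfrac23$, type~(d) (for $K_1$) or type~(e) (for $K_2$) on $[-2n,2n]$ both giving slope $1$, type~(f) on $[2n,2n+6]$ giving slope $\tfrac43$, and type~(g) on $[2n+6,6n+6]$ giving slope $\tfrac32$. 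This packaging makes the ``touches at breakpoints and lies below in between'' check uniform in $n$ and avoids the stumbles you ran into (e.g.\ the spurious slope $\tfrac{n+2}{3}$, which came from taking the wrong corner as an endpoint).

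There is, however, a genuine error in your identification of where the two gap functions differ. They agree on $(-\infty,-2n]$ and on $[2n,\infty)$; the discrepancy is entirely on $[-2n,2n]$, where $K_1$ follows the period-$4$ pattern $(-2n+1,-2n+2,-2n+5,-2n+6,\dots)$ and $K_2$ follows the period-$2$ pattern $(-2n+1,-2n+3,-2n+5,\dots)$. Both patterns have the \emph{same} lower envelope of slope $1$, namely the line $y=x+4n+4$, and that is the mechanism forcing the convex hulls to coincide. Your last paragraph places the discrepancy in the slope-$\tfrac23$ and slope-$\tfrac43$ regions $[-2n-6,-2n]$ and $[2n,2n+6]$, which is wrong (indeed the very table entries you cite, starting at $-2n+1$, lie in $[-2n,2n]$). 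Fixing this localization, and then checking that both staircase patterns on $[-2n,2n]$ sit above $y=x+4n+4$ and touch it at $x=-2n$ and $x=2n$, is what remains to complete your argument.
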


\begin{proof}
Consider  the gap function of $K_1$.
Let $f$ be the convex hull.
From Table \ref{table:Im1gap},
it is obvious that $f(x)=0$ for $x\le -6n-6$ and
$f(x)=2x$ for $x\ge 6n+6$.

On the interval $[-6n-6,-6n-2]$, the gap function has the branch as shown in Figure \ref{fig:part}(b).
It repeats on the intervals $[-6n-2,-6n+2], \dots, [-2n-10,-2n-6]$.
Thus $f(x)=\frac{1}{2} (x+6n+6)$ on $[-6n-6,-2n-6]$.

On $[-2n-6,-2n-3]$ and $[-2n-3,-2n]$, the branch is of Figure \ref{fig:part}(c).
Hence $f(x)=\frac{2}{3}(x+2n+6)+2n$ on $[-2n-6,-2n]$.

Similarly, the branch of Figure \ref{fig:part}(d) repeats on the intervals 
$[-2n,2n+4], [-2n+4,-2n+6], \dots, [2n-4,2n]$.
This gives $f(x)=x+4n+4$ on $[-2n,2n]$.

On $[2n,2n+3]$ and $[2n+3,2n+6]$, the branch of Figure \ref{fig:part}(f) appears.
Thus $f(x)=\frac{4}{3}(x-2n)+6n+4$ on $[2n,2n+6]$.

Finally, the branch of Figure \ref{fig:part}(g) repeats on $[2n+6,2n+10], \dots, [6n+2,6n+6]$.
Then $f(x)=\frac{3}{2}(x-2n-6)+6n+12$ on $[2n+6,6n+6]$.
We have thus shown that the convex hull $f(x)$ is given as claimed for $K_1$.

Next, consider the gap function of $K_2$.
For $x\le -2n$, the situation is the same as $K_1$.

On $[-2n,-2n+2]$, the branch of Figure \ref{fig:part}(e) appears.
This branch repeats on $[-2n+2,-2n+4],\dots, [2n-2,2n]$.
However, the convex hull is the same as $K_1$.

For the remaining range $x\ge 2n$,  the gap function is the same as one of $K_1$.
In conclusion, the gap functions of $K_1$ and $K_2$ are distinct only on $[-2n,2n]$, but
their convex hulls coincide there.
\end{proof}

\begin{corollary}\label{cor:upsilon}
The Upsilon invariants of $K_1$ and $K_2$ coincide.
\end{corollary}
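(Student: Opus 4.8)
The plan is to deduce this immediately from Lemma \ref{lem:convex} together with the two structural facts recalled at the beginning of the section. First I would invoke the theorem of Borodzik and Hedden \cite{BH}: since $K_1$ and $K_2$ are L--space knots, each Upsilon invariant $\Upsilon_{K_i}(t)$ is the Legendre--Fenchel transform of the associated gap function $2J(-m)$. Second I would use the elementary fact that for any function $f\colon\mathbb{R}\to\mathbb{R}$ one has $f^*=(f^{**})^*$, where $f^{**}$ is the convex hull of $f$; hence $f^*$ depends only on the convex hull of $f$.

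Combining these observations, it suffices to know that the gap functions of $K_1$ and $K_2$ have the same convex hull. This is exactly the content of Lemma \ref{lem:convex}, where the common convex hull $f(x)$ is exhibited explicitly as a piecewise linear function with breakpoints at $x=-6n-6,-2n-6,-2n,2n,2n+6,6n+6$. Therefore $\Upsilon_{K_1}(t)=f^*(t)=\Upsilon_{K_2}(t)$ for all $t\in[0,2]$, which is the assertion.

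There is essentially no obstacle left at this stage: the genuine work has already been done, namely computing the formal semigroups (Propositions \ref{prop:formal1} and \ref{prop:formal2}), reading off the gap sets and the tables of $I(m)$ and $2J(-m)$, and checking in Lemma \ref{lem:convex} that the two gap functions, although they differ on $[-2n,2n]$, have the same convex hull there. If one wishes to be more concrete, one can push the argument one step further and apply the Legendre--Fenchel transform to the explicit $f(x)$ of Lemma \ref{lem:convex} to obtain a closed formula for the common function $\Upsilon_{K_1}=\Upsilon_{K_2}$; in particular, the slope--$1$ segment $f(x)=x+4n+4$ on $[-2n,2n]$ yields $\Upsilon_{K_i}(1)=-(4n+4)\neq 0$, so the shared Upsilon invariant is non-zero, as needed for Theorem \ref{thm:main}.
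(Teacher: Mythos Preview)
Your proposal is correct and follows essentially the same route as the paper: invoke \cite{BH} to identify $\Upsilon_{K_i}$ with the Legendre--Fenchel transform of the gap function, note that this transform depends only on the convex hull, and apply Lemma \ref{lem:convex}. Your additional remark that $\Upsilon_{K_i}(1)=-(4n+4)\neq 0$ is a useful bonus not made explicit in the paper's proof of this corollary.
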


\begin{proof}
The Upsilon invariant is the Legendre--Fenchel transform of the gap function $2J(-m)$.
In fact, it depends only on the convex hull of the gap function.
By Lemma \ref{lem:convex}, $K_1$ and $K_2$ have the same convex hull for their
gap functions.
Thus the conclusion follows.
\end{proof}


\section{The Montesinos trick}\label{sec:mont}

In this section, we verify that $K_1$ and $K_2$ admit positive Dehn surgeries yielding
L--spaces by using the Montesinos trick \cite{Mon}.
For a surgery diagram on a strongly invertible link,
the Montesinos trick describes the resulting closed $3$--manifold as
the double branched cover of another knot or link obtained
from tangle replacements corresponding to the surgery coefficients on
some link obtained from the quotient of the original strongly invertible link under
the strong involution (see also \cite{MT, Wa}).

In Figure \ref{fig:knots}(1) and (2),
each link $K\cup C_1\cup C_2$ is placed in a strongly invertible position,
where the dotted line indicates the axis of the involution.

\begin{lemma}\label{lem:K1mont}
For $K_1$, $(16n+21)$--surgery yields an L--space. 
\end{lemma}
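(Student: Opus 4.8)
The plan is to apply the Montesinos trick to the strongly invertible surgery diagram for $K_1$ in Figure \ref{fig:knots}(1). Recall that $(16n+21)$--surgery on $K_1$ is the same $3$--manifold as the surgery on the link $K\cup C_1\cup C_2$ with coefficients $(16n+21)/1$ on $K$, $-1/n$ on $C_1$, and $-1/2$ on $C_2$; indeed $K_1$ is the image of $K$ after doing the $C_1$-- and $C_2$--surgeries, so appending an integral surgery on $K$ gives a surgery presentation of the same closed manifold on a link we can keep strongly invertible. First I would take the quotient of this strongly invertible link under the strong involution whose axis is drawn in the figure; each surgered component descends to an arc (a rational tangle summand) in the base $S^3$, and the Montesinos trick then expresses $(16n+21)$--surgery on $K_1$ as the double branched cover $\Sigma_2(L')$ of a knot or link $L'$ obtained from a fixed tangle by performing the tangle replacements dictated by the surgery coefficients $16n+21$, $-1/n$, and $-1/2$.

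The key computational steps are: (i) draw the quotient tangle carefully from Figure \ref{fig:knots}(1), keeping track of which band/arc corresponds to $K$, to $C_1$, and to $C_2$; (ii) insert the rational tangles $1/(16n+21)$, $-n$ (or $n/(-1)$), and $2/(-1)$ in the three slots and simplify the resulting diagram, which for these small-denominator coefficients should reduce to an explicit family of two-bridge links (or Montesinos links) depending linearly on $n$; (iii) identify $L'$ concretely — I expect it to be an alternating link, most likely a two-bridge link $\mathfrak{b}(\alpha,\beta)$ with $\alpha$ an explicit linear function of $n$ (consistent with the order of $H_1$, namely $16n+21$, since $\Sigma_2$ of a two-bridge link $\mathfrak{b}(\alpha,\beta)$ is the lens space of order $\alpha$). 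Since the double branched cover of an alternating (more generally quasi-alternating) link is an L--space, this immediately yields that $(16n+21)$--surgery on $K_1$ is an L--space. Alternatively, if $L'$ turns out to be a Montesinos link rather than two-bridge, one invokes the classification of Montesinos links with L--space double branched covers.

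There is a shortcut worth mentioning as a consistency check even if the Montesinos-trick computation is the actual proof: the paper has already established (via the positive-braid description, so fibered with $g(K_1)=6n+6$) that $K_1$, \emph{once known to be an L--space knot}, has an L--space surgery exactly for slopes $r\ge 12n+11$, and $16n+21\ge 12n+11$. But the logical content of this lemma is precisely to certify $K_1$ \emph{is} an L--space knot by exhibiting one positive L--space surgery, so the Montesinos trick is doing the essential work; the genus remark only tells us the coefficient $16n+21$ is safely in the admissible range and reassures us that the target manifold has the right first homology.

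The main obstacle will be step (ii): carrying out the tangle arithmetic and the planar isotopies to recognize the post-replacement diagram as a standard alternating link whose type can be read off for \emph{all} $n$ simultaneously. This requires a clean normal form for how the $n$--dependence enters (it should come from a twist region with $4n$ or $16n$ crossings coming from the $(-1/n)$--surgery and the linking number $4$), and some care to verify the diagram is reduced alternating — or, failing that, to check quasi-alternating-ness directly. Once the family $L'=\mathfrak{b}(16n+21,\beta_n)$ (or the appropriate Montesinos family) is pinned down, the conclusion is immediate from the known L--space property of double branched covers of quasi-alternating links.
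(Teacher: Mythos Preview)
Your overall strategy---Montesinos trick on the strongly invertible link $K\cup C_1\cup C_2$---is exactly what the paper does, but there is a concrete error in your setup that would derail the computation. You assert that $(16n+21)$--surgery on $K_1$ corresponds to putting coefficient $16n+21$ on $K$ in the original link diagram. This is false: performing $(-1/n)$--surgery on $C_1$ and $(-1/2)$--surgery on $C_2$ changes the Seifert framing of $K$. Since $\mathrm{lk}(K,C_1)=4$ and $\mathrm{lk}(K,C_2)=3$, the Rolfsen twists shift the framing by $n\cdot 4^2+2\cdot 3^2=16n+18$, so the correct coefficient on $K$ is $3$, not $16n+21$. (This is also why the paper remarks that the writhe of $K$ in the diagram is $3$: the tangle replacement for $K$ then becomes the $0$--tangle, which is what makes the subsequent isotopies tractable.) If you insert a $1/(16n+21)$ tangle in the $K$ slot you will be computing the branch set for the wrong manifold, and the expected simplification to a recognizable family will not occur.

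Once the coefficient is fixed to $3$, the quotient link is not two--bridge but the Montesinos knot $M(-3/7,-1/3,-1/n)$, so the double branched cover is the Seifert fibered space $M(0;-3/7,-1/3,-1/n)$ rather than a lens space. Your hedge (``if $L'$ turns out to be a Montesinos link\dots'') is therefore the relevant branch. The paper finishes by applying the Lisca--Stipsicz criterion to certify this Seifert manifold is an L--space; your alternative route via alternating links would also work here, since all three tangle fractions have the same sign and the Montesinos diagram is alternating, but you should name the actual link rather than speculate about two--bridge.
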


\begin{proof}
Assign the surgery coefficient $3$ on $K$ in Figure \ref{fig:knots}(1).
After performing $(-1/n)$--surgery on $C_1$ and $(-1/2)$--surgery on $C_2$,
our knot $K_1$ has surgery coefficient $16n+21$.

The left of Figure \ref{fig:mont1-0} shows the knot obtained from the tangle replacements.
In the diagram of Figure \ref{fig:knots}, we should remark that the component $K$ has writhe $3$.
Hence the tangle replacement corresponding to the quotient of $K$ is realized by the $0$--tangle
(depicted as the dotted circle).

Then Figures \ref{fig:mont1-0}, \ref{fig:mont1-1}, and \ref{fig:mont1-2} show
the deformation of the knot.
Finally, we obtain the Montesinos knot $M(-3/7,-1/3,-1/n)$.
Thus the double branched cover is the Seifert fibered manifold $M=M(0;-3/7,-1/3,-1/n)$.
We use the notation of  \cite{LS}.  That is, $M(e_0;r_1,r_2,r_3)$ is obtained by $e_0$--surgery on the unknot
with three meridians having $(-/r_i)$--surgery on the $i$-th one.
Then $-M=M(0;3/7,1/3,1/n)$.
By the criterion of \cite{LM,LS},  $M$ is an L--space.
\end{proof}

\begin{figure}[htpb]
\begin{center}
\includegraphics[scale=0.4]{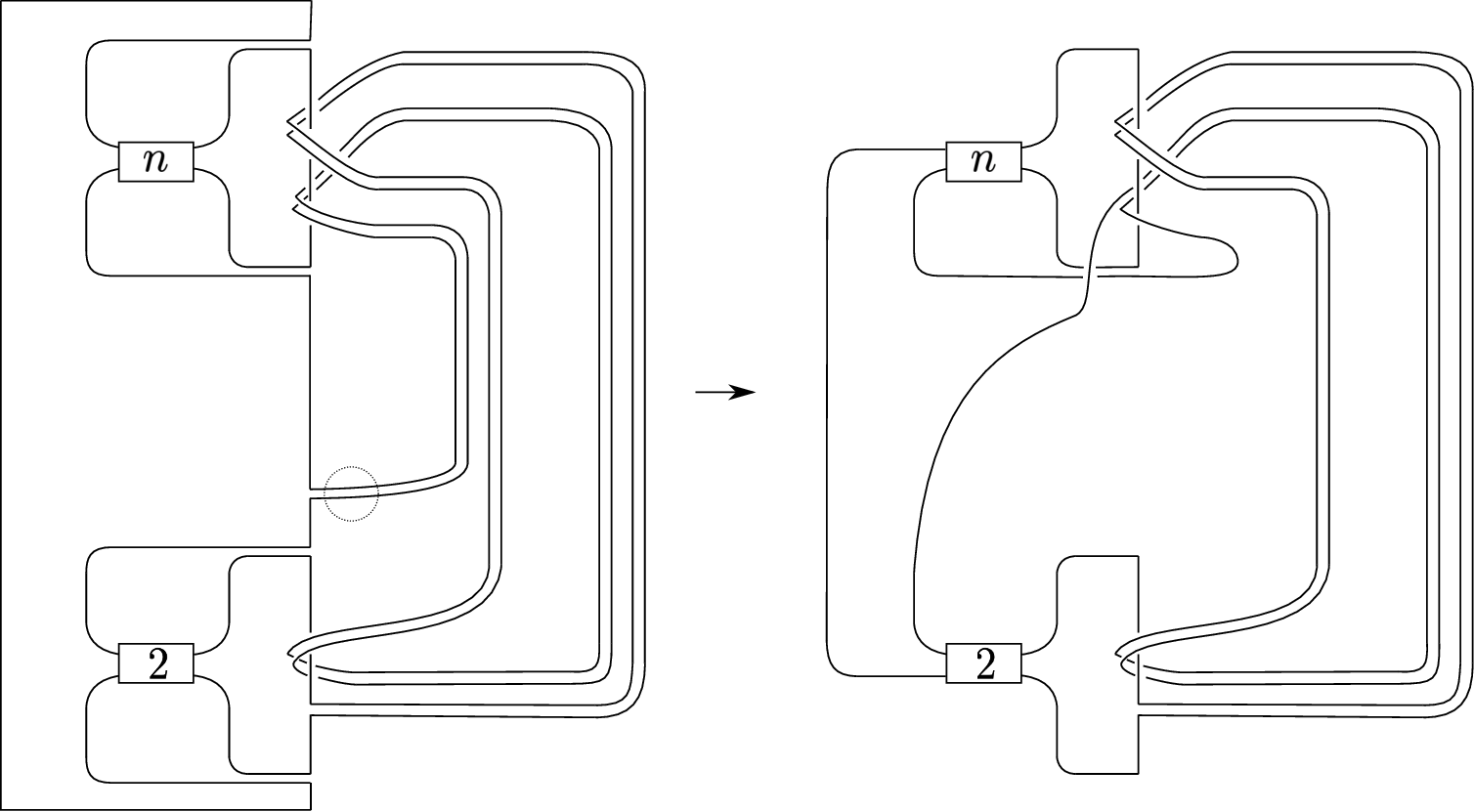}
\caption{The deformation for $K_1$.
Each rectangle box contains  horizontal right-handed half-twists with
indicated number.
}\label{fig:mont1-0}
\end{center}
\end{figure}

\begin{figure}[htpb]
\begin{center}
\includegraphics[scale=0.4]{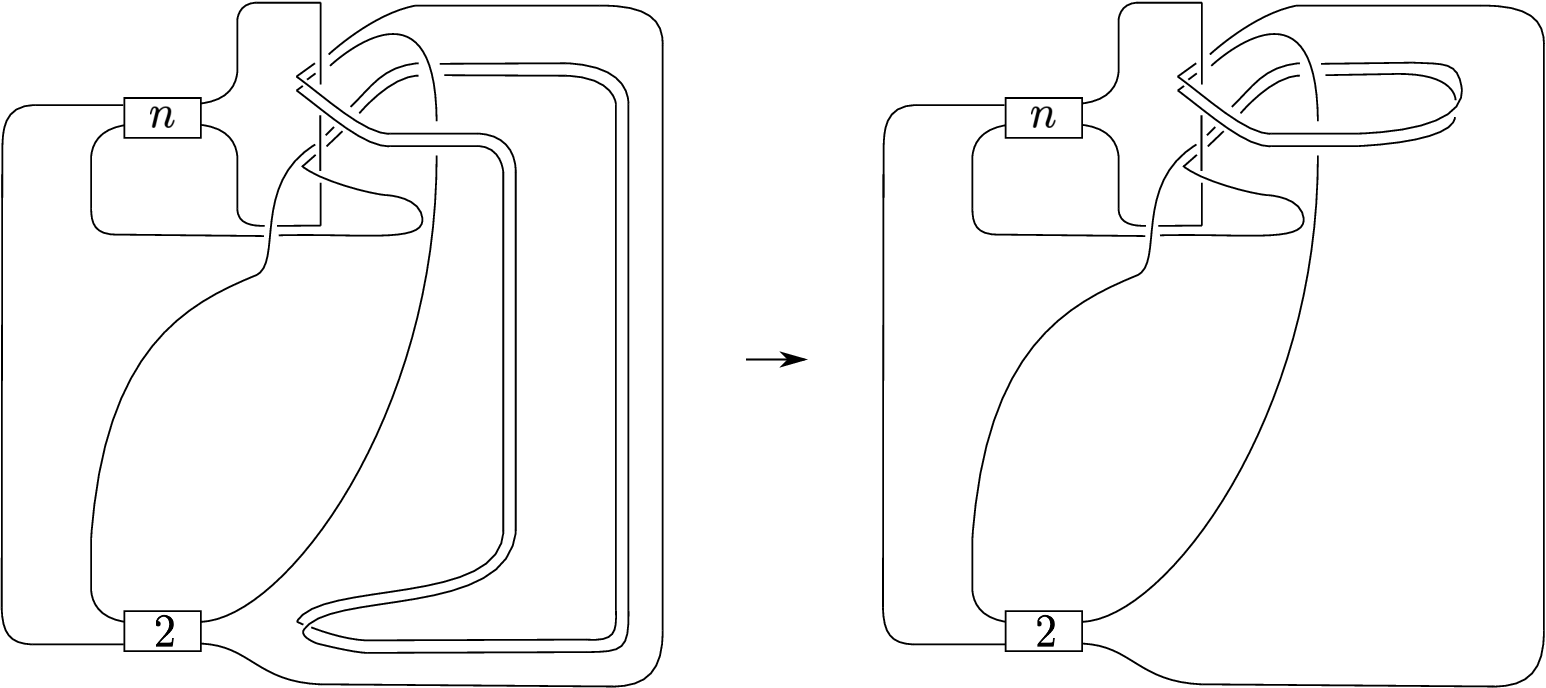}
\caption{The deformation for $K_1$ (continued from Figure \ref{fig:mont1-0}).
}\label{fig:mont1-1}
\end{center}
\end{figure}

\begin{figure}[htpb]
\begin{center}
\includegraphics[scale=0.4]{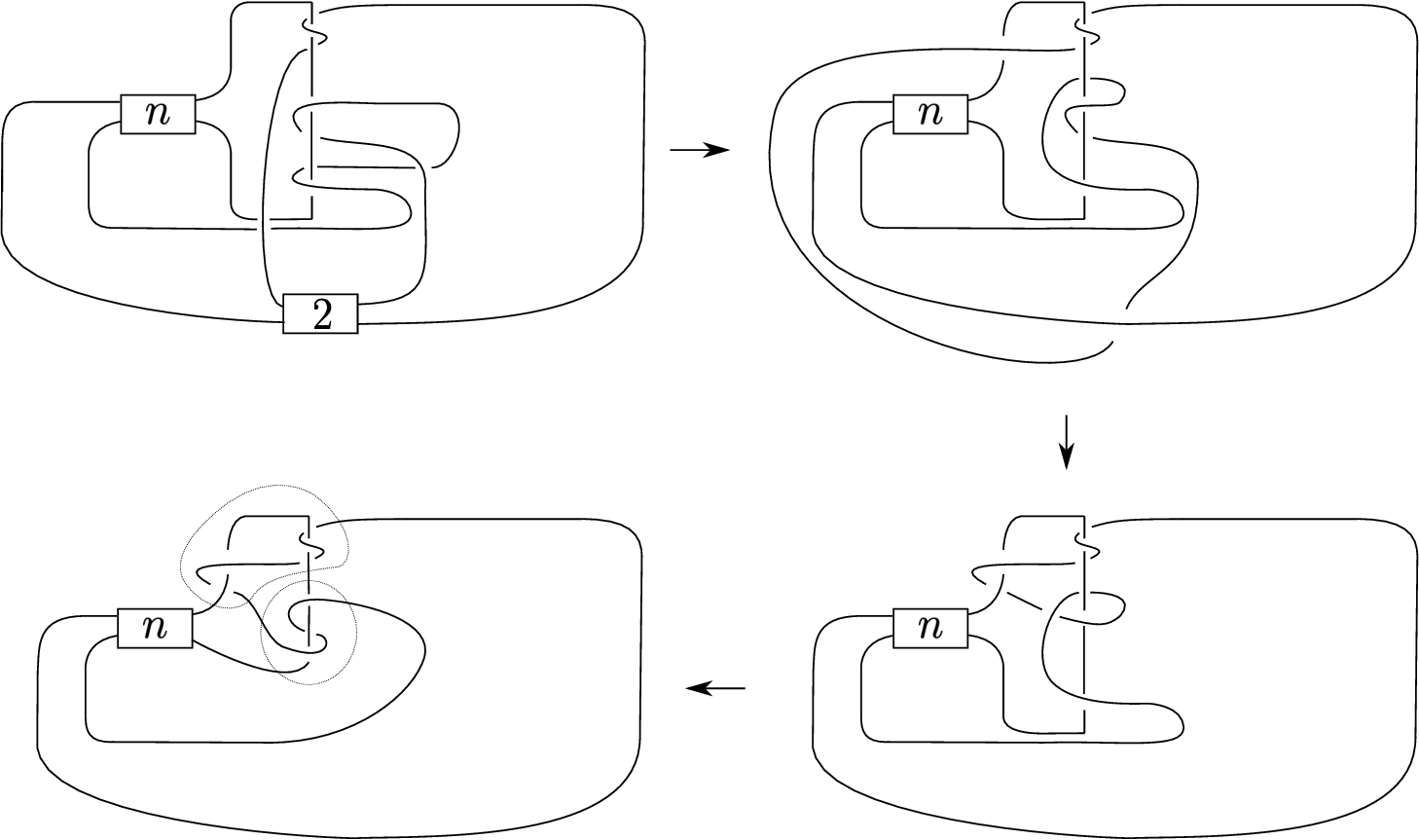}
\caption{The deformation for $K_1$ (continued from Figure \ref{fig:mont1-1}).
The left bottom is the Montesinos knot $M(-3/7, -1/3,-1/n)$.
}\label{fig:mont1-2}
\end{center}
\end{figure}


\begin{lemma}\label{lem:K2mont}
For $K_2$, $(16n+20)$--surgery yields an L--space. 
\end{lemma}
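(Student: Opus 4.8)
The plan is to mirror the proof of Lemma \ref{lem:K1mont} almost verbatim, applying the Montesinos trick to the strongly invertible surgery diagram in Figure \ref{fig:knots}(2). First I would assign the surgery coefficient $3$ to $K$ in that diagram; since the component $K$ again has writhe $3$, the tangle replacement corresponding to the quotient of $K$ is realized by the $0$--tangle, exactly as in the proof of Lemma \ref{lem:K1mont}. After performing $(-1/n)$--surgery on $C_1$ and $(-1/2)$--surgery on $C_2$, the resulting surgery coefficient on $K_2$ is $16n+20$: the self-linking contribution from $K$ gives $3$, the clasp with $C_1$ contributes $16n$ (the linking number with $C_1$ is $4$, and $(-1/n)$--surgery adds $4^2 n = 16n$), and the clasp with $C_2$ contributes $9\cdot 2 = 18$ from $\mathrm{lk}(K,C_2)=3$ and $(-1/2)$--surgery, for a total $3+16n+18-1 = 16n+20$ after accounting for the framing convention; the precise bookkeeping is the same continued-fraction computation as before.

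Next I would carry out the tangle deformation. The quotient link differs from the $K_1$ case only in the small region where $K_2$ is obtained from $K$ by a $\sigma_3^{-1}$ instead of a $\sigma_2^{-1}$ in the braid words displayed in Section \ref{sec:knots}; under the strong involution this local difference produces a slightly different rational tangle in the quotient. Following the isotopies analogous to Figures \ref{fig:mont1-0}--\ref{fig:mont1-2}, I expect the diagram to reduce to a Montesinos knot, say $M(r_1, r_2, -1/n)$ for explicit rationals $r_1, r_2$ replacing the $-3/7$ and $-1/3$ of the $K_1$ case. Its double branched cover is the Seifert fibered space $M = M(0; -r_1, -r_2, -1/n)$ in the notation of \cite{LS}, so that $-M = M(0; r_1, r_2, 1/n)$.

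Finally, I would invoke the L--space criterion of \cite{LM,LS} for Seifert fibered spaces over $S^2$ with three exceptional fibers: one checks that the orientation $-M$ (or $M$) carries no taut foliation / no left-order, equivalently that the relevant inequality on the Seifert invariants $\{r_1, r_2, 1/n\}$ holds for every $n \ge 1$. Since this is the identical mechanism used to finish Lemma \ref{lem:K1mont}, and since we already know from Section \ref{sec:knots} that $16n+20 = 16n+20 > 2g(K_2)-1 = 12n+11$ is in the L--space surgery range once $K_2$ is shown to be an L--space knot, the conclusion follows.

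The main obstacle will be the explicit tangle calculus: correctly tracking the rational tangles through the sequence of isotopies to identify the resulting Montesinos knot and its Seifert invariants, and then verifying that the L--space inequality of \cite{LM,LS} is satisfied uniformly in $n$. This is routine but delicate, and the honest version of the proof consists of drawing the analogues of Figures \ref{fig:mont1-0}--\ref{fig:mont1-2} and performing the continued-fraction arithmetic; I would present those figures and then state that the \cite{LM,LS} criterion applies, exactly as in the $K_1$ case.
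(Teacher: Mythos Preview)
Your plan has a genuine gap: the $K_2$ case does \emph{not} reduce to a single Montesinos knot, so mirroring the $K_1$ argument verbatim fails. First, the arithmetic is off. To land on coefficient $16n+20$ one must assign coefficient $2$ (not $3$) to $K$ in Figure~\ref{fig:knots}(2), since $2+4^2\cdot n+3^2\cdot 2=16n+20$; with writhe $3$ this forces the tangle replacement at the quotient of $K$ to be the $(-1)$--tangle, not the $0$--tangle. Your ``$3+16n+18-1$'' hides this discrepancy behind an unexplained $-1$.

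More seriously, after the tangle replacements and isotopies the resulting link $\ell$ is not a Montesinos link, so there are no Seifert invariants to feed into the \cite{LM,LS} criterion directly. The paper instead resolves a crossing of $\ell$ to obtain $\ell_\infty$ and $\ell_0$, checks $\det\ell=\det\ell_\infty+\det\ell_0$, and uses the L--space triad criterion for double branched covers \cite{BGW,OS,OS2}. The piece $\ell_\infty$ is the $(-3,3,n-1)$--pretzel knot, handled via \cite{LS}; the piece $\ell_0$ requires a \emph{second} resolution, yielding $\ell_{0\infty}=\ell_\infty$ and $\ell_{00}$, the latter a connected sum of a Hopf link and a Montesinos knot. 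Only after this two-step unpacking do all the branched covers become Seifert fibered and amenable to \cite{LS}. Your proposal misses this entire resolution layer, which is the actual content of the proof.
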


\begin{proof}
Assign the surgery coefficient $2$ on $K$ in Figure \ref{fig:knots}(2).
After performing $(-1/n)$--surgery on $C_1$ and $(-1/2)$--surgery on $C_2$,
$K_2$ has surgery coefficient $16n+20$.

The process is similar to that for $K_1$.
We should remark that the tangle replacement to the quotient of $K$ is realized by
$(-1)$--tangle as depicted in the dotted circle in Figure \ref{fig:mont2-0} (left), because
$K$ has writhe $3$ in the diagram.

\begin{figure}[htpb]
\begin{center}
\includegraphics[scale=0.4]{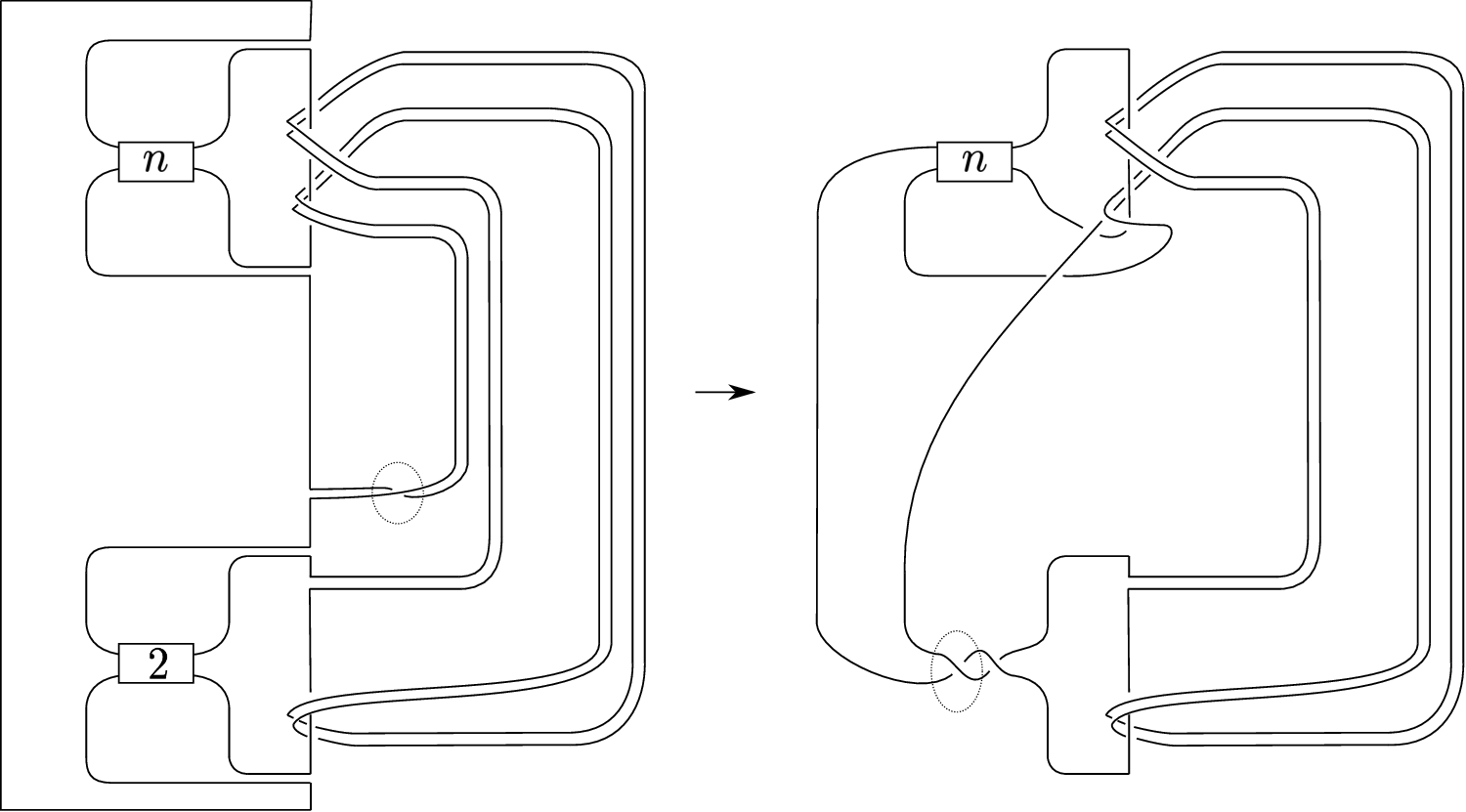}
\caption{The deformation for $K_2$.
Let $\ell$ be the right link.
}\label{fig:mont2-0}
\end{center}
\end{figure}

Let $\ell$ be the link as illustrated in the right of Figure \ref{fig:mont2-0}.
We need to verify that the double branched cover of $\ell$ is an L--space.

For the crossing of $\ell$ encircled in Figure \ref{fig:mont2-0} (right),
we perform two resolutions as shown in Figure \ref{fig:resolution}.
Let $\ell_\infty$ and $\ell_0$ be the resulting knots.
It is straightforward to calculate $\det \ell=16n+20$,
$\det \ell_\infty=9$ and $\det \ell_0=16n+11$ from the checkerboard colorings on the diagrams
of Figures \ref{fig:mont2-0}, \ref{fig:k2mont2-inf2} and \ref{fig:k2mont2-00}.
Thus the equation $\det \ell=\det \ell_\infty+\det \ell_0$ holds.
This implies that if the double branched covers of $\ell_\infty$ and $\ell_0$ are L--spaces, then
so is the double branched cover of $\ell$ (\cite{BGW,OS,OS2}).

\begin{figure}[htpb]
\begin{center}
\includegraphics[scale=0.5]{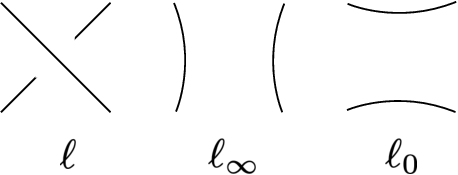}
\caption{Two resolutions.
}\label{fig:resolution}
\end{center}
\end{figure}

\begin{claim}\label{cl:ellinfty}
The knot $\ell_\infty$ is the $(-3,3,n-1)$--pretzel knot.
Its double branched cover is an L--space.
\end{claim}

\begin{proof}
Figures \ref{fig:k2mont2-inf} and \ref{fig:k2mont2-inf2} show that
the knot $\ell_\infty$ is the $(-3,3,n-1)$--pretzel knot.

If $n=1$, then $\ell_\infty$ is the connected sum of torus knots $T(2,3)$ and $T(2,-3)$.
Then the double branched cover is the connected sum of lens spaces $L(3,1)\# L(3,-1)$, which is an L--space.
If $n=2$, then $\ell_\infty$ is the $2$--bridge knot $S(4/9)$, so the double cover is a lens space.
Hence we assume $n>2$.

Since $\ell_\infty$ is the Montesinos knot $M(0;1/3,-1/3,-1/(n-1))$,
its double branched cover $M$ is the Seifert fibered manifold $M(0;1/3,-1/3,-1/(n-1))$.
Then $-M=M(-1; 2/3,1/3,1/(n-1))$.

We use the criterion of \cite{LS}.
If $n>3$, then set $r_1=2/3$, $r_2=1/3$ and $r_3=1/(n-1)$.
Then $1\ge r_1\ge r_2\ge r_3\ge 0$.
If there are no coprime integers $m>a>0$ such that $a/m>r_1$, $(m-a)/m>r_2$ and $1/m>r_3$,
then $-M$ is an L--space.
However, the first two give $2/3<a/m<2/3$, so there are no such integers.

Finally, assume $n=3$.
Set $r_1=2/3$, $r_2=1/2$ and $r_3=1/3$.  Then $r_1\ge r_2\ge r_3$.
If $1/m>r_3=1/3$, then $m<3$.
For $m=2$ and $a=1$, $a/m>r_1=2/3$ does not hold.
Thus there are no coprime integers $m>a>0$ as desired, which implies that $-M$ is an L--space.
\end{proof}

\begin{figure}[htpb]
\begin{center}
\includegraphics[scale=0.4]{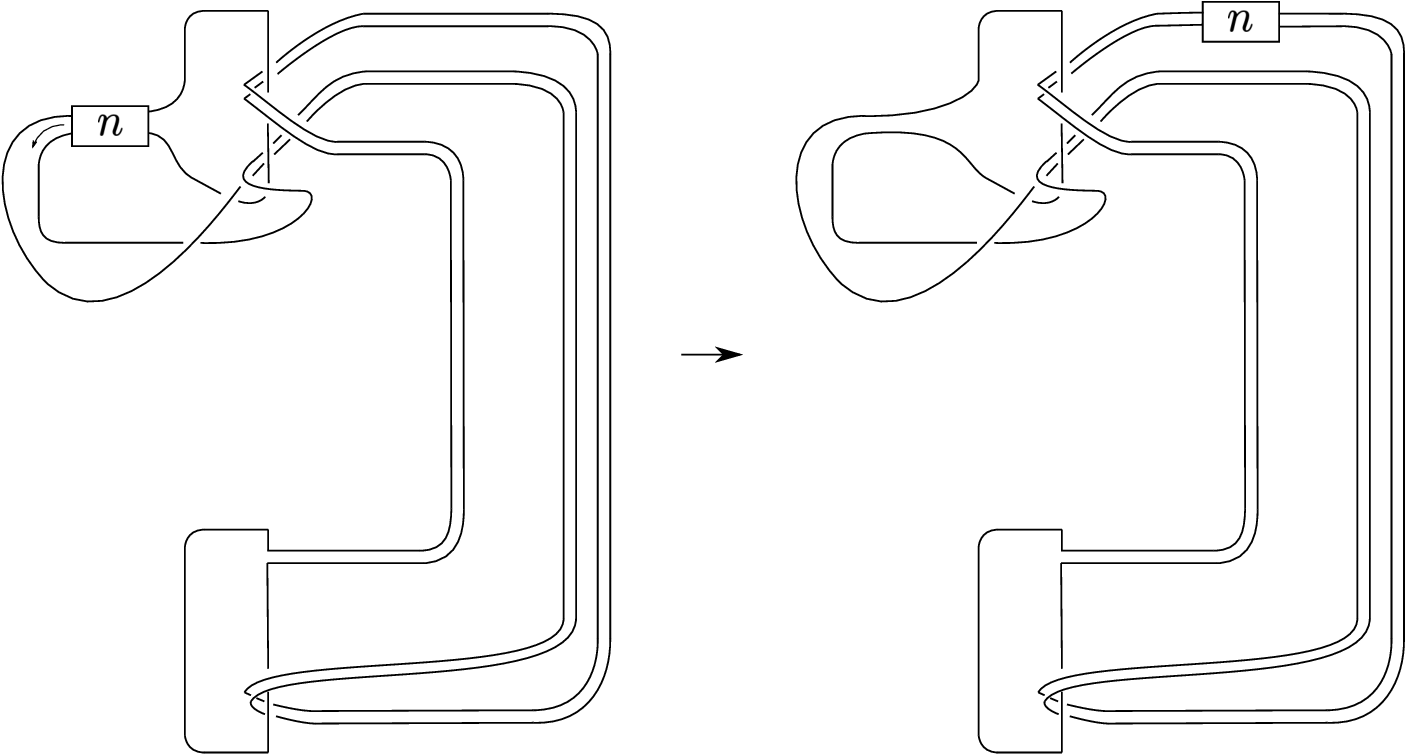}
\caption{A deformation of the knot $\ell_\infty$.
}\label{fig:k2mont2-inf}
\end{center}
\end{figure}

\begin{figure}[htpb]
\begin{center}
\includegraphics[scale=0.4]{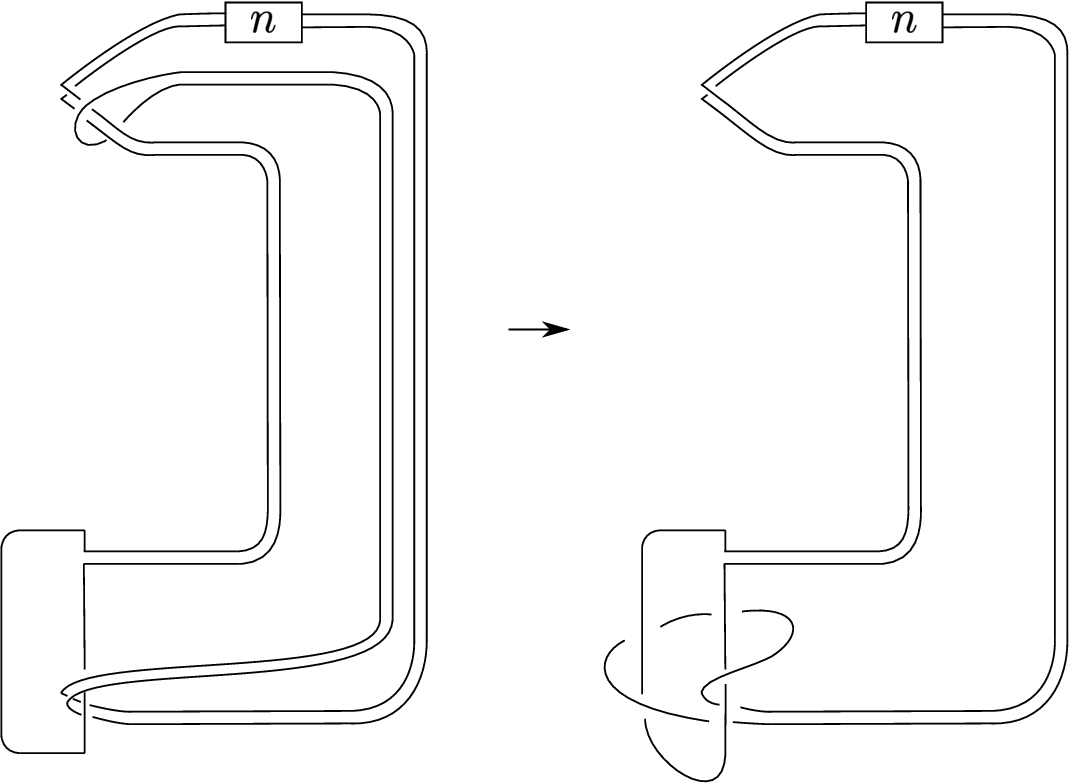}
\caption{A deformation of the knot $\ell_\infty$ (continued from Figure \ref{fig:k2mont2-inf}).
The right is the $(-3,3,n-1)$--pretzel knot.
}\label{fig:k2mont2-inf2}
\end{center}
\end{figure}

\begin{claim}\label{cl:ell0}
The double branched cover of $\ell_0$ is an L--space.
\end{claim}

\begin{proof}
For the crossing encircled in Figure \ref{fig:k2mont2-00},
we further perform the resolutions, which yield $\ell_{0\infty}$ and $\ell_{00}$.
Clearly, $\ell_{0\infty}=\ell_\infty$.
Hence $\mathrm{det}\, \ell_{0\infty}=9$.

\begin{figure}[htpb]
\begin{center}
\includegraphics[scale=0.4]{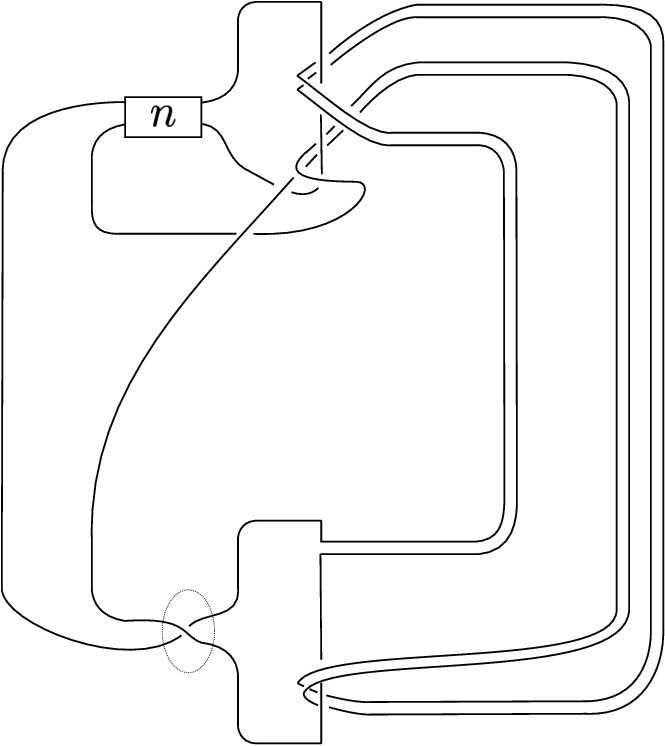}
\caption{The knot $\ell_0$.
}\label{fig:k2mont2-00}
\end{center}
\end{figure}

We can confirm that $\ell_{00}$ is the connected sum of the Hopf link and a Montesinos knot
as shown in Figures \ref{fig:k2mont2-00-1} and \ref{fig:k2mont2-00-2}.
From the diagram of Figure \ref{fig:k2mont2-00-2},
we see that $\mathrm{det}\,\ell_{00}=16n+2$.
Recall that $\mathrm{det}\,\ell_0=16n+11$.
Hence the equation $\mathrm{det}\,\ell_{0}=\mathrm{det}\,\ell_{0\infty}+\mathrm{det}\,\ell_{00}$ holds.

\begin{figure}[htpb]
\begin{center}
\includegraphics[scale=0.4]{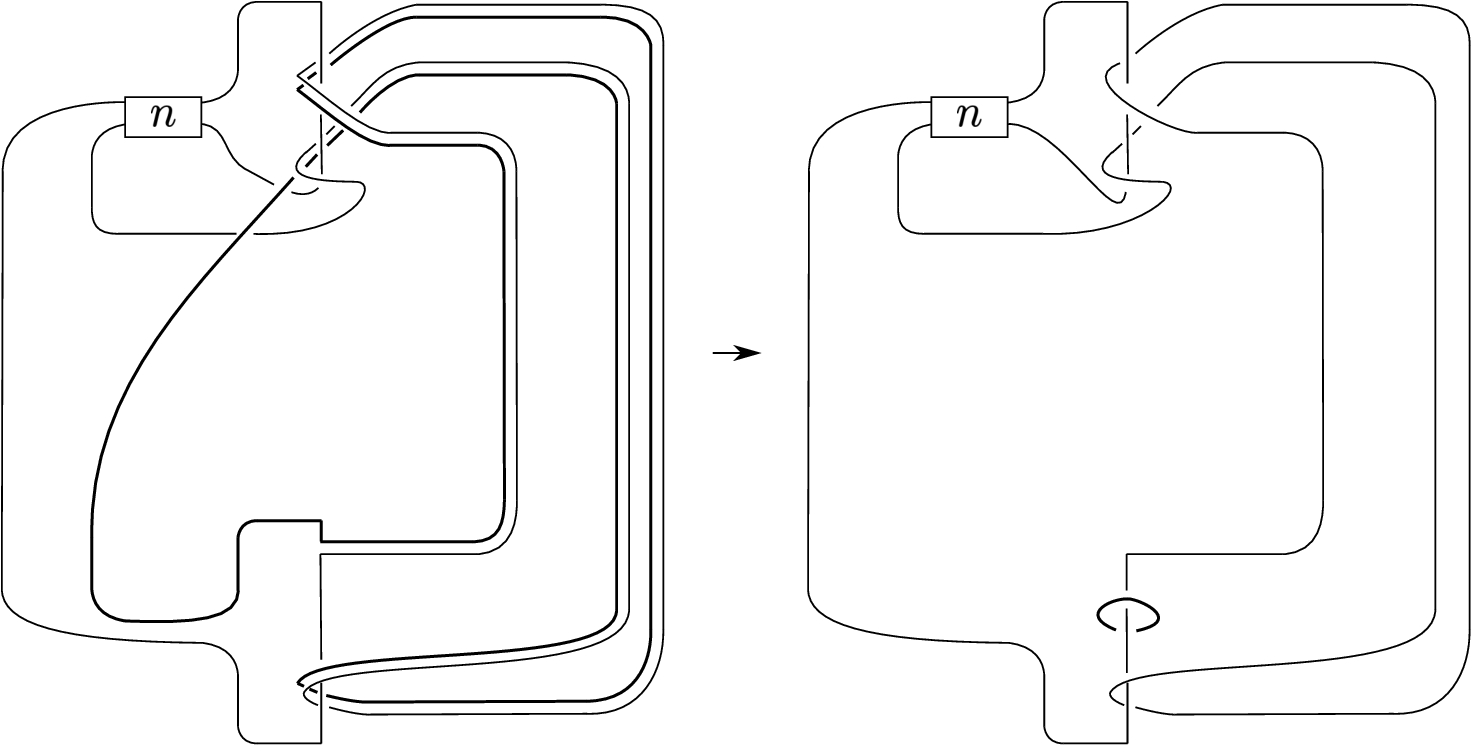}
\caption{The knot $\ell_{00}$ has the Hopf link as its connected summand.
}\label{fig:k2mont2-00-1}
\end{center}
\end{figure}

\begin{figure}[htpb]
\begin{center}
\includegraphics[scale=0.4]{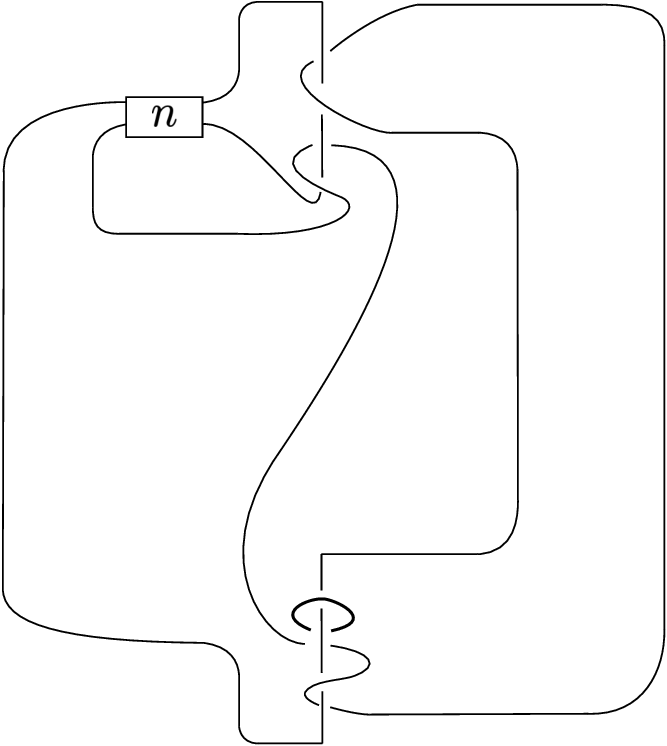}
\caption{The knot $\ell_{00}$ is the connected sum of the Hopf link
and the Montesinos knot $M(1/2,-1/3,n/(2n+1))$.
}\label{fig:k2mont2-00-2}
\end{center}
\end{figure}

From Claim \ref{cl:ellinfty}, the double branched cover of $\ell_{0\infty}$ is an L--space.
It remains to show that the double branched cover of $\ell_{00}$ is an L--space.

The double branched cover of the Montesinos knot 
$M=M(1/2,-1/3,n/(2n+1))$ is the Seifert fibered manifold $M(0;1/2,-1/3,n/(2n+1))$.
Since $M$ is homeomorphic to $M(-1;1/2,2/3,n/(2n+1))$,
set $r_1=2/3$, $r_2=1/2$ and $r_3=n/(2n+1)$.
Then $1\ge r_1\ge r_2\ge r_3\ge 0$.
We apply the criterion of \cite{LS} again.
If $1/m>r_3=n/(2n+1)\ge 1/3$, then $m<3$.
Hence there are no coprime integers $m>a>0$ such that $a/m>r_1=2/3$.
Thus $M$ is an L--space.

The double branched cover of $\ell_{00}$ is the connected sum of a lens space $L(2,1)$ and $M$.
Since the sum of L--spaces is an L--space \cite{OS}, we have the conclusion.
\end{proof}

By Claims \ref{cl:ellinfty} and \ref{cl:ell0}, we obtain that
the double branched cover of $\ell$ is an L--space.
\end{proof}

\section{Restorability of Alexander polynomials}\label{sec:restore}

In this section, we investigate the restorability of Alexander polynomial of an L--space knot
from the Upsilon invariant.

As easy examples,  we examine two torus knots.

\begin{example}
(1) Let $K=T(3,4)$.
Then $\Delta_K(t)=1-t+t^3-t^5+t^6$, so $\mathcal{S}_K=\{0,3,4\}\cup \mathbb{Z}_{\ge 6}$
and $\mathcal{G}_K=\mathbb{Z}_{<0}\cup \{1,2,5\}$.
It is easy to calculate $\Upsilon_K(t)$ as
\[
\Upsilon_K(t)=
\begin{cases}
-3t & \text{for $0\le t\le \frac{2}{3}$}, \\
-2 & \text{for $\frac{2}{3}\le t\le \frac{4}{3}$}, \\
3t-6 &  \text{for $\frac{4}{3}\le t\le 2$}.
\end{cases}
\]
The Legendre--Fenchel transformation on $\Upsilon_K(t)$ gives a function
\[
f(x)=
\begin{cases}
0 &  \text{for $x\le -3$}, \\
\frac{2}{3}(x+3) & \text{for $-3\le x\le 0$},\\
\frac{4}{3}x+2 & \text{for $0\le x\le 3$},\\
2x & \text{for $3\le x$}.
\end{cases}
\]
Of course, this is the convex hull of the gap function of $K$.
Figure \ref{fig:t34t35} shows the graphs of gap function of $K$ and $f$.

\begin{figure}[htpb]
\begin{center}
\includegraphics[scale=1.5]{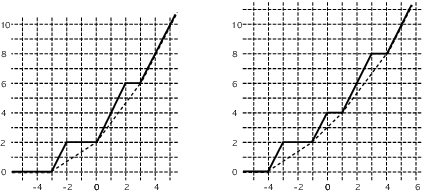}
\caption{The graphs of the gap functions and their convex hulls (broken line) of 
$T(3,4)$ (left) and $T(3,5)$ (right).
}\label{fig:t34t35}
\end{center}
\end{figure}

We consider the possibility of another gap function $G$ whose convex hull is $f$.
First, it forces $G(-3)=0$, $G(0)=2$ and $G(3)=6$.
Recall that each segment of the graph of a gap function has slope $0$ or $2$ as mentioned in Section \ref{sec:upsilon}.
Hence $G(-2)=2$.
Since a gap function is increasing, $G(-1)=2$.
Similarly, it is necessary that $G(1)=4$ and $G(2)=6$.
Thus $G$ coincides with the gap function of $K$.

This means that if another L--space knot $K'$ has the same Upsilon invariant as $K$, then
$\Delta_{K'}(t)=\Delta_K(t)$, because a gap function uniquely determines the Alexander polynomial.

(2) Let $K=T(3,5)$.
We have $\Delta_K(t)=1-t+t^3-t^4+t^5-t^7+t^8$, so
$\mathcal{S}_K=\{0,3,5,6\}\cup \mathbb{Z}_{\ge 8}$, and $\mathcal{G}_K=\mathbb{Z}_{<0}\cup \{1,2,4,7\}$.
Then $\Upsilon_K(t)$ is given as
\[
\Upsilon_K(t)=
\begin{cases}
-4t & \text{for $0\le t\le \frac{2}{3}$}, \\
-t-2 & \text{for $\frac{2}{3}\le t \le 1$}, \\
t-4 &  \text{for $1\le t \le \frac{4}{3}$},\\
4t-8 & \text{for $\frac{4}{3}\le t\le 2$}.
\end{cases}
\]
Figure \ref{fig:t34t35} shows the graphs of gap function of $K$ and the convex hull, which is the Legendre--Fenchel transform of $\Upsilon_K(t)$. 
As in (1), the convex hull uniquely restores the gap function.
\end{example}

In general, it is rare that the convex hull uniquely restores a gap function.
In Example \ref{ex:237}, we determined the gap function and its convex hull of the $(-2,3,7)$--pretzel knot
(see Figure \ref{fig:grid}).
It is possible that another gap function $G$ takes the same values on integers except $G(0)=6$, keeping the same convex hull.
This new gap function corresponds to 
the Alexander polynomial $\Delta(t)=1-t+t^3-t^5+t^7-t^9+t^{10}$.
This polynomial satisfies the condition of \cite{K}, but there is no hyperbolic L--space knot in Dunfield's list whose
Alexander polynomial is $\Delta(t)$.
It seems to be a hard question whether there exists a hyperbolic L--space knot with $\Delta(t)$.
Of course, there exists a hyperbolic knot whose Alexander polynomial is $\Delta(t)$ by \cite{Fu,St}.
Also, $\Delta(t)$ is the Alexander polynomial of the $(2,3)$--cable of $T(2,5)$, which is not an L--space knot \cite{Ho0}.

If we put off
the realizability of the Alexander polynomial or the gap function by a hyperbolic L--space knot, then
we can easily design many Alexander polynomials which are restorable from convex hulls.

It is a classical result that any polynomial $\Delta(t)$ satisfying $\Delta(1)=1$ and $\Delta(t^{-1})\stackrel{.}{=}\Delta(t)$
is realized by a knot in the $3$--sphere as its Alexander polynomial.
(Here, $\stackrel{.}{=}$ shows the equality up to units $\pm t^i$ in the Laurent polynomial ring $\mathbb{Z}[t,t^{-1}]$.)
Furthermore, we assume that $\Delta(t)$ has the form of (\ref{eq:alex}).
Formally, we define the formal semigroup $\mathcal{S}$ by (\ref{eq:ps}), and in turn, its gap set and the gap function.

\begin{proposition}\label{prop:restore}
Let $m\ge 3$ be an integer, and let $\Delta(t)=1-t+t^m-t^{m+1}+t^{m+2}-t^{2m+1}+t^{2m+2}$.
Then its gap function, defined formally, is uniquely determined from the convex hull.
\end{proposition}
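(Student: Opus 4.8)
The plan is to compute the gap function $\gamma$ of $\Delta(t)$ and its convex hull $f$ explicitly, and then to show that the structural constraints a gap function must obey, together with the requirement that its convex hull be $f$, leave $\gamma$ as the only possibility. First, dividing $\Delta(t)$ by $1-t$ gives $\Delta(t)/(1-t)=1+t^{m}+(t^{m+2}+\dots+t^{2m})+(t^{2m+2}+t^{2m+3}+\dots)$, so the formal semigroup is $\mathcal{S}=\{0,m\}\cup\{m+2,\dots,2m\}\cup\mathbb{Z}_{\ge 2m+2}$; in particular all coefficients of the power series lie in $\{0,1\}$, so the gap set $\mathcal{G}=\mathbb{Z}_{<0}\cup\{1,\dots,m-1\}\cup\{m+1,2m+1\}$ and the gap function are genuinely defined, and the genus is $g=m+1$. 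Tabulating $I$ and $J$ as in Example \ref{ex:237}, one finds that $\gamma(x)=2J(-x)$ is the piecewise linear function which is identically $0$ on $(-\infty,-m-1]$, has slope $2$ on $[-m-1,-m]$, slope $0$ on $[-m,-1]$, slope $2$ on $[-1,0]$, slope $0$ on $[0,1]$, slope $2$ on $[1,m]$, slope $0$ on $[m,m+1]$, and slope $2$ on $[m+1,\infty)$; in particular $\gamma(-m-1)=0$, $\gamma(-1)=2$, $\gamma(0)=4$, $\gamma(1)=4$, and $\gamma(m+1)=2m+2$.

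Next, exactly as in the proof of Lemma \ref{lem:convex}, I would check that the convex hull $f$ of $\gamma$ is $0$ on $(-\infty,-m-1]$, the chord from $(-m-1,0)$ to $(-1,2)$ on $[-m-1,-1]$, the line $x+3$ on $[-1,1]$, the chord from $(1,4)$ to $(m+1,2m+2)$ on $[1,m+1]$, and $2x$ on $[m+1,\infty)$. The hypothesis $m\ge 3$ enters precisely here: it guarantees that the successive slopes $0<\tfrac{2}{m}<1<2-\tfrac{2}{m}<2$ are strictly increasing, so $f$ is convex with corners exactly at the integers $-m-1,-1,1,m+1$, and $f(0)=3$.

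For the uniqueness, let $G$ be any formally defined gap function whose convex hull equals $f$. Then $G$ is non-negative, non-decreasing, linear on each interval $[k,k+1]$ with slope in $\{0,2\}$, even-integer valued at integers, and $G\ge f$ everywhere (the convex hull is the largest convex minorant). Since the convex hull of such a $G$ is the lower convex hull of the lattice points $(k,G(k))$, each corner of $f$ is one of these points, so $G(k)=f(k)$ at $k=-m-1,-1,1,m+1$; combined with monotonicity, non-negativity and the slope bound this forces $G\equiv 0$ on $(-\infty,-m-1]$ and $G(x)=2x$ on $[m+1,\infty)$. On $[-m-1,-1]$ exactly one unit subinterval has slope $2$; if it were $[j,j+1]$ with $-m\le j\le -2$, then $G(j)=0<f(j)$, contradicting $G\ge f$, so it must be $[-m-1,-m]$ and $G=\gamma$ there. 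Symmetrically, on $[1,m+1]$ exactly one unit subinterval has slope $0$; if it were $[j,j+1]$ with $1\le j\le m-1$, then $G(x)=2x$ on $[j+1,m+1]$ and $G(j+1)=2(j+1)<f(j+1)$, again contradicting $G\ge f$, so it must be $[m,m+1]$. Finally on $[-1,1]$, $G(0)$ is an even integer with $3=f(0)\le G(0)\le G(1)=4$, hence $G(0)=4$. Therefore $G=\gamma$, and since a gap function determines its Alexander polynomial up to units, $\Delta(t)$ is the unique polynomial of the form (\ref{eq:alex}) whose formally defined gap function has this convex hull.

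The only real obstacle is the convex-hull computation: one must confirm that the two "outer" linear pieces of $f$ are the chords joining $(-m-1,0)$ to $(-1,2)$ and $(1,4)$ to $(m+1,2m+2)$, which is exactly the step that fails for $m\le 2$ and so explains the hypothesis $m\ge 3$. The remaining case analysis is routine once one notices that on each side of the peak the single exceptional unit subinterval is pushed to the extreme end by the inequality $G\ge f$; the rest is bookkeeping with $I$ and $J$.
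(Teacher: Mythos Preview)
Your proof is correct and follows essentially the same approach as the paper: compute the formal semigroup, gap set, gap function, and convex hull explicitly, then argue uniqueness from the slope-$0$/slope-$2$ constraint. The paper's own uniqueness argument is a single sentence (``since each segment of the graph of any gap function has slope $0$ or $2$, there is no other gap function whose convex hull is $f$''), whereas you spell out the details via the inequality $G\ge f$ and a case analysis on the location of the exceptional unit interval; this is a welcome elaboration rather than a different method.
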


Again, the polynomial $\Delta(t)$ in Proposition \ref{prop:restore}
satisfies the condition of \cite{K}, but it is open whether $\Delta(t)$ is realized by a hyperbolic L--space knot or not.
(When $m=3$, $\Delta(t)$ is the Alexander polynomial of $T(3,5)$.)

\begin{proof}
By (\ref{eq:ps}), the formal semigroup is $\mathcal{S}=\{0,m\}\cup \{m+2,m+3,\dots,2m\}\cup \mathbb{Z}_{2m+2}$,
so the gap set is $\mathcal{G}=\mathbb{Z}_{<0}\cup \{1,2,\dots,m-1\}\cup \{m+1,2m+1\}$.
Set $g=m+1$.
Then we can calculate the gap function $2J(-m)$ as in Table \ref{table:restore}.

\begin{table}[h]
\begin{tabular}{c|c|cccccc|ccc}
$m$    & $\le -m-1$ & $-m$ & $0$ & $2$ & $3$ & $\dots$ & $m$ & $m+2$ & $m+3$ & $\dots$ \\
\hline
$2J(-m)$ & 0  &2  & 4 &  $6$ &  $8$ &  $\dots$  & $2m+2$ & $2m+4$ & $2m+6$ &  $\dots$ 
\end{tabular}
\caption{The gap function $2J(-m)$.}\label{table:restore}
\end{table}

Let $f$ be the convex hull.  Then it is given by
\[
f(x)=
\begin{cases}
0 & \text{for $x\le -m-1$}, \\
\frac{2}{m}(x+m+1) & \text{for $-m-1\le x\le -1$}, \\
x+3 & \text{for $-1\le x\le 1$}, \\
\frac{2m-2}{m}(x-1)+4 & \text{for $1\le x\le m+1$}, \\
2x & \text{for $m+1\le x$}.
\end{cases}
\]
Since each segment of the graph of any gap function has slope $0$ or $2$,
there is no other gap function whose convex hull is $f$.
\end{proof}

Finally, we prove Theorem \ref{thm:restorable}.
 For reader's convenience, we record the braid words for the knots 
\texttt{t09847} and \texttt{v2871}.
Both are the closures of $4$--braids, whose words are almost the same:
\[
(\sigma_2\sigma_1\sigma_3\sigma_2)^3 (\sigma_2\sigma_1^2 \sigma_2) \sigma_1\quad  \text{and} \quad
(\sigma_2\sigma_1\sigma_3\sigma_2)^3 (\sigma_2\sigma_1^2 \sigma_2) \sigma_1^3.
\]

\begin{proof}[Proof of Theorem \ref{thm:restorable}]
Let $K$ be the hyperbolic knot \texttt{t09847} in the SnapPy census.
The Alexander polynomial is $\Delta_K(t)=1-t+t^4-t^5+t^7-t^9+t^{10}-t^{13}+t^{14}$, so
the formal semigroup is $\mathcal{S}_K=\{0,4,7,8,10,11,12\}\cup \mathbb{Z}_{\ge 14}$.

Figure \ref{fig:t09847} shows the graph of the gap function and its convex hull (we omit the details).
It consists of branches of types (a), (b), (c), (f), (g) and (h) of Figure \ref{fig:part} from the left.
Then there is no other gap function with the same convex hull.

\begin{figure}[htpb]
\begin{center}
\includegraphics[scale=1.3]{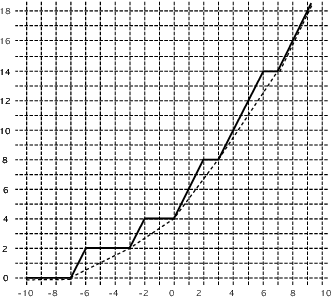}
\caption{The graph of the gap function and its convex hull (broken line) of \texttt{t09847}.
}\label{fig:t09847}
\end{center}
\end{figure}

Next, let $K$ be the hyperbolic knot \texttt{v2871}.
The Alexander polynomial is $1-t+t^4-t^5+t^7-t^8+t^9-t^{11}+t^{12}-t^{15}+t^{16}$, so
the formal semigroup is $\{0,4,7,9,10,12,13,14\}\cup \mathbb{Z}_{\ge 16}$ and
the gap set is $\mathbb{Z}_{<0}\cup \{1,2,3,5,6,8,11,15\}$.
Figure \ref{fig:v2871} shows the graph of the gap function and its convex hull.
In this case, the graph consists of branches of types
(a), (b), (c), (e), (f), (g) and (h) of Figure \ref{fig:part} from the left.
Again, there is no other gap function with the same convex hull.
\end{proof}

\begin{figure}[htpb]
\begin{center}
\includegraphics[scale=1.3]{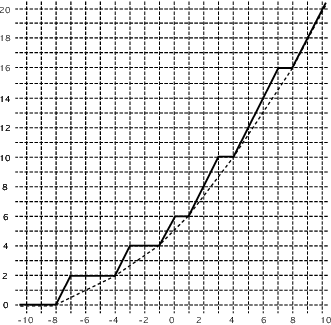}
\caption{The graph of the gap function and its convex hull (broken line) of \texttt{v2871}.
}\label{fig:v2871}
\end{center}
\end{figure}

\section*{Acknowledgement}

The author would like to thank Kouki Sato for valuable communication.

\bibliographystyle{amsplain}

\end{document}